\theoremstyle{plain}
\newcommand{\1}{\mathbf{1}}
\newcommand{\esp}{\operatorname{\mathbb{E}}}
\newcommand{\var}{\operatorname{var}}
\newcommand{\R}{\mathbb{R}}
\newcommand{\C}{\mathbb{C}}
\newcommand{\N}{\mathbb{N}}
\newcommand{\A}{\mathcal{A}}
\newcommand{\cumuf}{\kappa^{\operatorname{free}}}
\newcommand{\crblocks}{\operatorname{\bf cr}}
\newcommand{\ccblocks}{\operatorname{\bf cc}}
\newcommand{\ncrblocks}{\operatorname{\bf ncr}}
\newcommand{\proj}{\operatorname{Proj}}
\title[Tensor CLT]{Central Limit Theorem for tensor products of free variables}
\author[C.\ Lancien \and P.\ O.\ Santos \and P.\ Youssef]{%
        Cécilia Lancien \and 
        Patrick Oliveira Santos \and 
        Pierre Youssef
        }
\address{C\'ecilia Lancien. CNRS \& Institut Fourier UMR 5582, Universit\'e Grenoble Alpes, Grenoble, France.}
\email{cecilia.lancien@univ-grenoble-alpes.fr}
\address{Patrick Oliveira Santos. Laboratoire d'Analyse et de Math\'ematiques Appliqu\'ees UMR 8050, Universit\'e Gustave Eiffel, Universit\'e Paris Est Cr\'eteil, Marne-la-Vall\'ee, France.}
\email{patrick.oliveirasantos@u-pem.fr}
\address{Pierre Youssef. Division of Science, NYU Abu Dhabi, Abu Dhabi, UAE \& Courant Institute of Mathematical Sciences, New York University, New York, USA.}
\email{yp27@nyu.edu}
\begin{document}
\newtheorem{theorem}{Theorem}[section]

\newtheorem{corollary}[theorem]{Corollary}
\newtheorem{lemma}[theorem]{Lemma}
\newtheorem{conjecture}[theorem]{Conjecture}
\newtheorem{proposition}[theorem]{Proposition}

\theoremstyle{definition}
\newtheorem{example}[theorem]{Example}
\newtheorem{definition}[theorem]{Definition}
\newtheorem{remark}[theorem]{Remark}

\maketitle
\begin{abstract}
We establish a central limit theorem for tensor product random variables $c_k:=a_k \otimes a_k$, where $(a_k)_{k \in \N}$ is a free family of variables. We show that if the variables $a_k$ are centered, the limiting law is the semi-circle. Otherwise, the limiting law depends on the mean and variance of the variables $a_k$ and corresponds to a free interpolation between the semi-circle law and the classical convolution of two semi-circle laws.
\end{abstract}

\section{Introduction}\label{sec: introduction}

The Free Central Limit Theorem serves as a foundational principle in free probability \cite{voiculescu1985symmetries}, \cite[Lecture 8]{nica2006lectures}. It asserts that as the number of freely independent operators summed together approaches infinity, the distribution of the normalized sum tends towards an asymptotically semi-circular shape. This mirrors the classical Central Limit Theorem but with independence conditions replaced by free independence (also known as freeness) and the Gaussian limit substituted with a semi-circular limit. More precisely, let $(\A,\tau)$ be a unital noncommutative probability space equipped with a faithful tracial state $\tau$ \cite[Lecture 1]{nica2006lectures}. We say that subalgebras $\mathcal{A}_1,\ldots, \mathcal{A}_d\subset \A$ are free if 
\begin{align*}
    \tau(a_1\ldots a_p)=0,
\end{align*}
whenever $p \ge 1$, $a_i \in \mathcal{A}_{j_i}$, $\tau(a_i)=0$ for all $i\in[p]$ and $j_1 \ne j_2 \ne \cdots \ne j_p$. We say that random variables $a_1,\ldots,a_d \in \mathcal{A}$ are free if their generated algebras are free. 
We say that a sequence of (self-adjoint) variables $a_n \in (\A_n,\tau_n)$ converges in distribution to a variable $a \in (\A,\tau)$ if
\begin{align*}
    \tau_n(a_n^p) \to \tau(a^p),
\end{align*}
for all integers $p \ge 0$ and we denote it $a_n \Rightarrow a$. We denote $a-\lambda:=a-\lambda \1$, where $\1 \in \A$ is the unit in the algebra. As usual, $\tau(a)$ is the mean of $a$ and the variance is given by
\begin{align*}
    \var(a)=\tau((a-\tau(a))^2).
\end{align*}

The Free Central Limit Theorem states that if $a_1,\ldots,a_n\in(\A,\tau)$ are free self-adjoint i.i.d random variables with mean $\lambda$ and variance $\sigma^2$, then 
$$
\frac{1}{\sigma\sqrt{n}}\sum_{k\in[n]} (a_k-\lambda) \Rightarrow \mu_{sc},
$$
where $\mu_{sc}$ denotes the semi-circle distribution whose density is given by 
\begin{align*}\label{semicircular density}
    f_{sc}(x)=\frac{1}{2\pi}\sqrt{4-x^2} \mathbf{1}_{|x| \le 2}.
\end{align*}

The goal of this paper is to establish a central limit theorem for the tensor product of free random variables. Concretely, given $a_1,\ldots,a_n\in(\A,\tau)$ free self-adjoint i.i.d random variables, we aim at studying the convergence of the normalized sequence
\begin{equation}\label{eq: intro-tensor-setting}
       \frac{1}{\sqrt{n}}\sum_{k \in [n]}\left(a_k \otimes a_k-\tau\otimes \tau(a_k\otimes a_k)\right),
\end{equation}
in the product space $(\A\otimes \A,\tau\otimes \tau)$. 

Just as free probability captures the limiting behavior of random matrices, the above expression appears naturally as the limiting object corresponding to several models of random quantum channels \cite{lancien2023limiting}. Indeed, given $M_1,\ldots,M_n \in \mathcal{M}_d(\C)$ independent random self-adjoint matrices, it was shown in \cite{lancien2023limiting} that the empirical spectral distribution (ESD) of the quantum channel 
\begin{align*}
    \Delta_{d,n}:=\frac{1}{\sqrt{n}}\sum_{k \in [n]} \left(M_k \otimes M_k-\esp [M_k \otimes M_k]\right),
\end{align*}
having the $M_k$'s as random Kraus operators and with fixed Kraus rank $n$, converges as $d\to \infty$ to the expression in \eqref{eq: intro-tensor-setting} with the $a_k$'s being the corresponding limits of the ESD of the $M_k$'s. Moreover, it was in particular shown that if the random matrices $M_k$ are centered, then the ESD of $\Delta_{d,n}$ converges as $n,d\to \infty$ to the semi-circle distribution. These two statements combined suggest that, in the case where the $a_k$'s are centered, an analogue of the free central limit theorem should hold for the $a_k\otimes a_k$'s. Whereas these heuristics indicate that the semi-circle distribution should appear as the limit of \eqref{eq: intro-tensor-setting} when the $a_k$'s are centered, the convergence and the explicit limit are not clear in the general case. 
The goal of this paper is to address this by establishing the convergence of the expression in \eqref{eq: intro-tensor-setting} and identifying the limiting object. 
The latter, as we show, depends on the mean and variance of the variables $a_k$'s and represents a free interpolation between a semi-circle distribution and the classical convolution of two semi-circle distributions.

Random matrix models of the form 
\begin{align} \label{eq:M-QIT}
    M = \sum_{k \in [n]}M_k \otimes M_k,
\end{align}
for $M_1,\ldots,M_n \in \mathcal{M}_d(\C)$ independent random self-adjoint matrices, are in fact useful in other areas of Quantum Information Theory. When the $M_k$'s are positive semidefinite matrices, normalizing $M$ by its trace produces a model for a random separable quantum state. Little is known about the typical asymptotic spectrum of separable states, contrary to that of entangled ones \cite{ambainis2012}. Moreover, a random matrix $M$ of the form \eqref{eq:M-QIT} appears naturally when performing a so-called realignment operation on a quantum state. Understanding the spectrum of the realignment of a state is important as it gives information on the entanglement of the state. In \cite{aubrun2012}, this was done in the particular case where the $M_k$'s are Gaussian matrices (corresponding to the case where the state is a normalized Wishart matrix). The results and techniques we develop here (combined with those in \cite{lancien2023limiting}) could be useful in addressing the questions mentioned above.

Given a measure $\mu$, we denote
\begin{align*}
    (t\mu)(A):=\mu(t^{-1}A),
\end{align*}
its dilation by $t \ne 0$, where $A$ is any Borel set in $\R$. Equivalently, if $a$ is a random variable with distribution $\mu$, then $ta$ has distribution $t\mu$.

The following is our main theorem.
\begin{theorem}\label{th: main theorem}
    Let $a \in (\A,\tau)$ be a self-adjoint random variable with mean $\tau(a)=\lambda$ and variance $\var(a)=\sigma^2\ne 0$. Denote  
    $$
    \delta^2:=\var(a\otimes a)= \sigma^2(\sigma^2+2\lambda^2),
    $$
    and
    \begin{align*}
    q:=\frac{2\lambda^2}{\sigma^2+2\lambda^2} \in [0,1].
\end{align*}
Given $(a_k)_{k\in \N}$ a sequence of free copies of $a$, the normalized sum 
    \begin{align*}
        S_n:= \frac{1}{\delta \sqrt{n}}\sum_{k \in [n]}(a_k\otimes a_k-\lambda^2)
    \end{align*}
    converges in distribution as $n\to \infty$ to 
    \begin{equation}\label{eq: limit measure}
        \mu_q:=\sqrt{q}\left(\frac{1}{\sqrt{2}}\mu_{sc}+\frac{1}{\sqrt{2}}\mu_{sc}\right)\boxplus\sqrt{1-q}\,\mu_{sc},
    \end{equation}
    where $+$ denotes the classical convolution and $\boxplus$ denotes the free convolution.
\end{theorem}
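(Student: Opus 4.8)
The plan is to prove convergence by the method of moments: compute $\lim_{n\to\infty}\tau\otimes\tau(S_n^p)$ for every $p$ and match it with the $p$-th moment of $\mu_q$. Since $\mu_q$ is a free convolution of compactly supported measures it is determined by its moments, so moment convergence yields convergence in distribution. First I would center and decompose. Writing $b_k:=a_k-\lambda$, each $b_k$ is self-adjoint, centered, with $\var(b_k)=\sigma^2$, and in $\A\otimes\A$ one has the commuting ``left'' and ``right'' copies $\beta_k:=b_k\otimes\1$ and $\gamma_k:=\1\otimes b_k$. Expanding the product gives the key identity
\begin{equation*}
a_k\otimes a_k-\lambda^2\1=\beta_k\gamma_k+\lambda\beta_k+\lambda\gamma_k .
\end{equation*}
Two structural facts drive the computation: within a single tensor factor the variables $(b_k)_k$ are free, so mixed free cumulants vanish and $\kappa_1(b_k)=0$; across the two factors the algebras $\A\otimes\1$ and $\1\otimes\A$ are classically independent, so $\tau\otimes\tau$ factorizes as $\tau(\cdot)\,\tau(\cdot)$ on a product of left letters times right letters.

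Then I would expand $\tau\otimes\tau(S_n^p)=\delta^{-p}n^{-p/2}\sum_{k_1,\dots,k_p}\tau\otimes\tau(c_{k_1}\cdots c_{k_p})$, with $c_k:=a_k\otimes a_k-\lambda^2\1$, choosing at each position one of the three terms $\{\beta_k\gamma_k,\lambda\beta_k,\lambda\gamma_k\}$ and applying the free moment–cumulant formula in each factor. The standard CLT scaling then leaves only index patterns in which every value occurs exactly twice (singletons die because $\kappa_1(b_k)=0$, blocks of size $\ge 3$ are suppressed by the $n^{-p/2}$ normalization), so $p=2m$ and the surviving objects are pair partitions $\rho$ of $[2m]$. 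A per-pair analysis shows that a pair $\{i,j\}$ contributes only for three ``colors'': both positions of type $\beta\gamma$ (weight $\sigma^4$), both of type $\lambda\beta$ (weight $\lambda^2\sigma^2$), or both of type $\lambda\gamma$ (weight $\lambda^2\sigma^2$); call these $Q,L,R$. The non-crossing requirement in each factor translates into: the $Q\cup L$ pairs are mutually non-crossing and the $Q\cup R$ pairs are mutually non-crossing (equivalently $Q$ is non-crossing with everything, $L$ with $L$, $R$ with $R$, while an $L$ may cross an $R$). Dividing by $\delta^{2m}$ and using $\lambda^2\sigma^2/\delta^2=q/2$ and $\sigma^4/\delta^2=1-q$, I obtain
\begin{equation*}
\lim_{n\to\infty}\tau\otimes\tau(S_n^{2m})=\sum_{(\rho,\mathrm{col})}(1-q)^{n_Q}\Big(\tfrac q2\Big)^{n_L+n_R},
\end{equation*}
the sum over $3$-colored pair partitions obeying the above crossing constraints, with odd moments vanishing.

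Finally I would identify this sum with the moments of $\mu_q$. Realize $\mu_q$ as the law of $Y+Z$, where $Y=\sqrt{q/2}\,(s_1+s_2)$ with $s_1,s_2$ classically independent standard semicirculars (so $Y$ carries $\sqrt q$ times the classical convolution $\tfrac1{\sqrt2}\mu_{sc}+\tfrac1{\sqrt2}\mu_{sc}$), and $Z=\sqrt{1-q}\,s_3$ is a standard semicircular free from $\{s_1,s_2\}$. Expanding $\tau((Y+Z)^{2m})$ via freeness of $Y$ and $Z$ together with the semicircularity and classical independence of $s_1,s_2$ produces pair partitions whose $s_3$-pairs (weight $1-q$) are non-crossing with all blocks, whose $s_1$- and $s_2$-pairs (weight $q/2$ each) are non-crossing within their own color, and in which an $s_1$-pair may cross an $s_2$-pair — exactly the colored sum above under the dictionary $Q\leftrightarrow s_3,\ L\leftrightarrow s_1,\ R\leftrightarrow s_2$. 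A low-order check ($\tau\otimes\tau(S_n^4)\to 2+q^2/2$ on both sides) confirms the matching. Hence all moments agree and $S_n\Rightarrow\mu_q$.

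The main obstacle is step three: extracting from the two-sided moment–cumulant expansion the precise and \emph{asymmetric} crossing constraints. The point is that a single free tensor factor forces non-crossing within each of the color classes $Q,L$ (resp.\ $Q,R$), while the classical independence \emph{between} the two factors is exactly what frees up the $L$–$R$ crossings; it is this asymmetry that interpolates between the pure semicircle (all-$Q$, $q=0$) and the classical convolution of semicircles. Getting these constraints right, and matching them on the $\mu_q$ side where the same asymmetry reappears as ``$s_3$ free from the commuting pair $s_1,s_2$'', is the crux; the remaining scaling and counting arguments are routine free-probability combinatorics.
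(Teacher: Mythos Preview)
Your proposal is correct and takes a genuinely different route from the paper. The paper never writes the decomposition $a_k\otimes a_k-\lambda^2=\beta_k\gamma_k+\lambda\beta_k+\lambda\gamma_k$; instead it works directly with the un-decomposed centred tensors, reduces by the usual CLT counting to evaluating $\tau\otimes\tau(\pi):=\tau\otimes\tau(b_{i_1}\cdots b_{i_p})$ for each pair partition $\pi$, and then computes this quantity by an inductive block-removal scheme (Lemmas~5.3--5.7): one peels off blocks of $\pi$ one at a time, each removal forcing a binary colour on the neighbouring blocks in the intersection graph $G(\pi)$, and a consistent colouring exists iff $G(\pi)$ is bipartite, in which case each connected component of size $\ge 2$ contributes $2(q/2)^{|T|/2}$. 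On the $\mu_q$ side the paper computes free cumulants and moments separately (Propositions~3.1--3.2) and matches the two bipartite-partition expressions.

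Your three-term decomposition short-circuits the block-removal: the two tensor legs are manifestly the two colour classes, the $Q$-pairs are forced to be noncrossing because they sit in \emph{both} legs, and the $L$--$R$ crossings survive precisely because of the classical independence between legs --- so the bipartiteness of $G(\pi)$ appears for free rather than through an induction. The same coloured-pair-partition sum then reappears on the $\mu_q$ side via the realisation $\sqrt{q/2}\,s_1+\sqrt{q/2}\,s_2+\sqrt{1-q}\,s_3$, which is exactly an instance of the Wick formula for $\varepsilon$-independent semicirculars (Speicher--Wysocza\'nski, M\l{}otkowski); the paper cites this framework in the introduction but does not use it in the proof. Your argument is shorter and more conceptual; the paper's approach, while heavier, has the mild advantage of producing the value of $\tau\otimes\tau(\pi)$ for each individual $\pi$, not just the sum over $\pi$. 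The one point to tighten in a full write-up is the mixed-moment formula for $(s_1,s_2,s_3)$: it is correct as stated, but deserves either a citation to the $\varepsilon$-free literature or the two-line direct argument (integrate out the free semicircular $s_3$ first via its noncrossing Wick rule, then use commutativity and classical independence of $s_1,s_2$ in each resulting region).
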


The difficulty in analyzing $S_n$ stems from the complicated dependence structure exhibited by tensors, combining classical independence (between the two legs of the tensor) and freeness (between the variables across tensors). In the centered case, similar computations were made for semi-circle random variables \cite{nica2016meandric,dehornoy2014dual}.
It would be of interest to design a general notion of independence corresponding to the tensor case, analyze its properties, derive the corresponding limit theorems, and characterize the corresponding universal objects. One particular generalization is by replacing the tensor product with the product of $\varepsilon$-independent random variables \cite{speicher2016mixtures,mlotkowski2004lambdafree,speicher2019quantum}. A direct consequence of Theorem~\ref{th: main theorem} is that such a notion cannot, in general, reduce to freeness. 

\begin{corollary}\label{cor: non freeness of tensor products}
    Let $a_1,\ldots,a_n \in (\A,\tau)$ be self-adjoint free i.i.d noncentered random variables. 
Then $\{a_k\otimes a_k: k \in [n]\}$ are not free.
\end{corollary}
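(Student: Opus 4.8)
The plan is to prove the contrapositive: assume the family $\{a_k \otimes a_k : k \in [n]\}$ were free, derive the limiting distribution of $S_n$ under that assumption, and show it contradicts Theorem~\ref{th: main theorem} whenever the $a_k$'s are noncentered. Concretely, each $c_k := a_k \otimes a_k$ is a self-adjoint element of $(\A \otimes \A, \tau \otimes \tau)$ with mean $\tau\otimes\tau(c_k) = \lambda^2$ and variance $\var(c_k) = \delta^2 = \sigma^2(\sigma^2 + 2\lambda^2)$, exactly the quantities appearing in the theorem. If these variables were free, then since they are i.i.d.\ copies (all $c_k$ share the same distribution in $(\A\otimes\A, \tau\otimes\tau)$), the ordinary Free Central Limit Theorem quoted in the introduction would apply verbatim to the normalized sum $S_n = \frac{1}{\delta\sqrt{n}}\sum_{k\in[n]}(c_k - \lambda^2)$, forcing $S_n \Rightarrow \mu_{sc}$.

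The second ingredient is to observe that Theorem~\ref{th: main theorem} already computes the true limit to be $\mu_q$, and that $\mu_q = \mu_{sc}$ if and only if $q = 0$. First I would note that when $q = 0$ the expression \eqref{eq: limit measure} degenerates: the coefficient $\sqrt{q}$ kills the classically-convolved term, leaving $\mu_q = \mu_{sc}$, while when $q = 1$ we instead get the classical convolution $\tfrac{1}{\sqrt 2}\mu_{sc} + \tfrac{1}{\sqrt 2}\mu_{sc}$, which is \emph{not} semicircular (for instance its fourth free cumulant, or equivalently its excess kurtosis, is nonzero because classical convolution of semicircles is not free-stable). For intermediate $q \in (0,1)$ the measure $\mu_q$ is a nontrivial free convolution of a non-semicircular measure with a semicircle and hence is itself non-semicircular. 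The cleanest way to detect this discrepancy is through moments: comparing the fourth moment of $\mu_q$ against that of $\mu_{sc}$, or equivalently the fourth free cumulant $\cumuf_4$, which vanishes for $\mu_{sc}$ but not for $\mu_q$ when $q \neq 0$.

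Putting these together: the noncentered hypothesis $\lambda \neq 0$ (together with $\sigma^2 \neq 0$) gives $q = \frac{2\lambda^2}{\sigma^2 + 2\lambda^2} > 0$, so the genuine limit $\mu_q$ is \emph{not} the semicircle. This contradicts the conclusion $S_n \Rightarrow \mu_{sc}$ that freeness would have forced. Hence the assumption of freeness is untenable, and $\{a_k \otimes a_k : k \in [n]\}$ cannot be free. Note that a single pair ($n = 2$) already suffices to expose the contradiction at the level of a fixed moment of the normalized sum, so the corollary holds for all $n \geq 2$; the case $n = 1$ is vacuous since a single variable is trivially ``free.''

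The step I expect to require the most care is the verification that $\mu_q \neq \mu_{sc}$ for $q > 0$, i.e.\ isolating a concrete distinguishing statistic. The most transparent route is to compute the fourth moment of $S_n$ in the limit, which under the freeness assumption must equal the semicircle's fourth moment $2$, whereas the actual limit carries an extra contribution proportional to $q$ coming from the classical (tensor) correlations between the two legs. I would extract this extra term either directly from the structure of $\mu_q$ via free cumulants—using that free cumulants linearize $\boxplus$ and that the free cumulants of the classical convolution $\tfrac{1}{\sqrt2}\mu_{sc}+\tfrac{1}{\sqrt2}\mu_{sc}$ are not all zero beyond order two—or, more concretely, by reading off the fourth moment of $a\otimes a$ and comparing with what freeness would predict. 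Either way the sign and nonvanishing of the discrepancy must be pinned down, and this is where one must be attentive that the noncentered assumption is used in an essential way.
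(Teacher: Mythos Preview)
Your approach is correct and is exactly the argument the paper gives: assume freeness of the $a_k\otimes a_k$'s, invoke the Free CLT to force $S_n \Rightarrow \mu_{sc}$, and contradict Theorem~\ref{th: main theorem} since $\lambda \neq 0$ gives $q>0$ and hence $\mu_q \neq \mu_{sc}$. Your suggestion to certify $\mu_q\neq\mu_{sc}$ via the fourth free cumulant (indeed $\cumuf_4(\mu_q)=q^2/2$ by Proposition~\ref{prop: moments of mu_q}) makes explicit a step the paper leaves tacit, and your remark that the discrepancy is already visible in a fixed mixed moment of two tensors cleanly handles the finite-$n$ formulation of the corollary.
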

The above corollary trivially follows from Theorem~\ref{th: main theorem}, since if the $a_k\otimes a_k$'s were free, the limit of their normalized sum would be the semi-circle distribution contradicting the conclusion of Theorem~\ref{th: main theorem} (when $\lambda\neq 0$). This fact was originally proved in \cite{collins2017freeness} where, more generally, the freeness of tensors of free variables was characterized.

This paper is organized as follows. In Section \ref{sec: prelim}, we recall some definitions and notations. In Section~\ref{sec: properties limit}, we provide some properties of the limiting measure appearing in Theorem~\ref{th: main theorem}. Section~\ref{sec: existence} establishes the existence of the limit, while Section~\ref{section: proof} is dedicated to the proof of Theorem~\ref{th: main theorem}.

\subsection*{Acknowledgments} 
The last named author would like to thank Guillaume C\'ebron and Roland Speicher for helpful discussions. The second named author also thanks Philippe Biane for helpful discussions. Part of this work was initiated during a stay of the second named author at New York University in Abu Dhabi, partly funded by a doctoral mobility grant delivered by Universit\'e Gustave Eiffel; he would like to thank both institutions for their support and the excellent working assumptions. The first named author was supported by the ANR projects ESQuisses (grant number ANR-20-CE47-0014-01), STARS (grant number ANR-20-CE40-0008), and QTraj (grant number ANR-20-CE40-0024-01).

\section{Preliminaries and notations}\label{sec: prelim}



Given $p\in \N$, a partition $\pi=\{V_1, \ldots ,V_k\}$ of $[p]$ is a collection of disjoint sets $V_1,\ldots,V_k$ called blocks such that 
\begin{align*}
    V_1 \cup \cdots \cup V_k=[p].
\end{align*}
We denote by $P(p)$ the set of partitions of $[p]$. We say that a partition $\pi \in P(p)$ is \textit{connected} (also referred to as a linked diagram in \cite{NH1979}) if no proper subinterval of $[p]$ can be written as the union of blocks of $\pi$. A partition $\pi \in P(p)$ has a crossing $i<k<j<l$ if there exist two disjoint blocks $V_1,V_2 \in \pi$ such that $\{i,j\}\subset V_1$ and $\{k,l\} \subset V_2$. In this case, we say that $V_1$ crosses $V_2$.
A block $V\in \pi$ is crossing if there exists another $V' \in \pi$ such that $V'$ crosses $V$, and noncrossing if it does not cross any other block. 
We say that a partition $\pi \in P(p)$ is a \textit{noncrossing} partition if all its blocks are noncrossing. 
We denote by $P^{\operatorname{con}}(p)$ (resp.~$NC(p)$) the set of all connected (resp.~noncrossing) partitions of $[p]$; see Figure \ref{fig: partitions}. We note that the cardinal $|NC(p)|$ is equal to $C_p$, the $p$-th Catalan number.
\begin{figure}[H]
     \centering
     \begin{subfigure}[b]{0.3\textwidth}
         \centering
         \begin{tikzpicture}[scale=0.50]
            \draw[-] (1,0) edge (1,1);
            \draw[-] (1,1) edge (3,1);
            \draw[-] (3,1) edge (3,0);
        
            \draw[-] (2,0) edge (2,2);
            \draw[-] (2,2) edge (5,2);
            \draw[-] (5,2) edge (5,0);
        
            \draw[-] (4,0) edge (4,1);
            \draw[-] (4,1) edge (6,1);
            \draw[-] (6,1) edge (6,0);
            \draw[-] (6,1) edge (7,1);
            \draw[-] (7,1) edge (7,0);
        \end{tikzpicture}
        \caption{Connected partition}
        \label{fig: connected partition}
    \end{subfigure}
     \hfill
     \begin{subfigure}[b]{0.3\textwidth}
         \centering
         \begin{tikzpicture}[scale=0.50]
            \draw[-] (1,0) edge (1,1);
            \draw[-] (1,1) edge (3,1);
            \draw[-] (3,1) edge (3,0);
        
            \draw[-] (2,0) edge (2,2);
            \draw[-] (2,2) edge (4,2);
            \draw[-] (4,2) edge (4,0);
            \draw[-] (4,2) edge (5,2);
            \draw[-] (5,2) edge (5,0);
        
            \draw[-] (6,0) edge (6,1);
            \draw[-] (6,1) edge (7,1);
            \draw[-] (7,1) edge (7,0);
        \end{tikzpicture}
         \caption{General partition}
         \label{fig: general partition}
     \end{subfigure}
     \hfill
     \begin{subfigure}[b]{0.3\textwidth}
         \centering
         \begin{tikzpicture}[scale=0.50]
            \draw[-] (1,0) edge (1,2);
            \draw[-] (1,2) edge (4,2);
            \draw[-] (4,2) edge (4,0);
            \draw[-] (4,2) edge (5,2);
            \draw[-] (5,2) edge (5,0);
        
            \draw[-] (2,0) edge (2,1);
            \draw[-] (2,1) edge (3,1);
            \draw[-] (3,1) edge (3,0);
        
            \draw[-] (6,0) edge (6,1);
        \end{tikzpicture}
         \caption{Noncrossing partition}
         \label{fig: noncrossing partition}
     \end{subfigure}
        \caption{Examples of partitions.}
        \label{fig: partitions}
\end{figure}
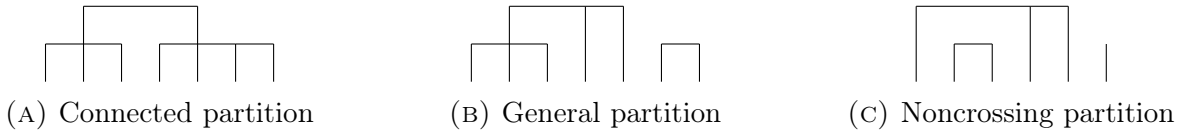
Finally, for a partition $\pi \in P(p)$, we denote by $G(\pi)$ its \textit{intersection graph}. It is the graph over the blocks of $\pi$ such that two blocks are connected if they cross, under some arbitrary labeling. We say that a partition $\pi$ is a bipartite partition if its intersection graph is bipartite and denote it $\pi \in P^{\operatorname{bi}}(p)$; see Figure \ref{fig: partitions gc} for the partitions in Figure \ref{fig: partitions}.
\begin{figure}[H]
     \centering
     \begin{subfigure}[b]{0.3\textwidth}
         \centering
         \begin{tikzpicture}[scale=0.50]
            \node at (0,0) (A) {};
            \fill[black] (A) circle(1.5pt);
            \node at (1,1) (B) {};
            \fill[black] (B) circle(1.5pt);
            \node at (2,0) (C) {};
            \fill[black] (C) circle(1.5pt);

            \draw[-] (0,0) edge (1,1);
            \draw[-] (1,1) edge (2,0);
        \end{tikzpicture}
        \caption{Connected partition}
        \label{fig: connected partition gc}
    \end{subfigure}
     \hfill
     \begin{subfigure}[b]{0.3\textwidth}
         \centering
         \begin{tikzpicture}[scale=0.50]
            \node at (0,0) (A) {};
            \fill[black] (A) circle(1.5pt);
            \node at (1,1) (B) {};
            \fill[black] (B) circle(1.5pt);
            \node at (2,0) (C) {};
            \fill[black] (C) circle(1.5pt);

            \draw[-] (0,0) edge (1,1);
        \end{tikzpicture}
         \caption{General partition}
         \label{fig: general partition gc}
     \end{subfigure}
     \hfill
     \begin{subfigure}[b]{0.3\textwidth}
         \centering
         \begin{tikzpicture}[scale=0.50]
            \node at (0,0) (A) {};
            \fill[black] (A) circle(1.5pt);
            \node at (1,1) (B) {};
            \fill[black] (B) circle(1.5pt);
            \node at (2,0) (C) {};
            \fill[black] (C) circle(1.5pt);
        \end{tikzpicture}
         \caption{Noncrossing partition}
         \label{fig: noncrossing partition gc}
     \end{subfigure}
        \caption{Examples of intersection graphs.}
        \label{fig: partitions gc}
\end{figure}
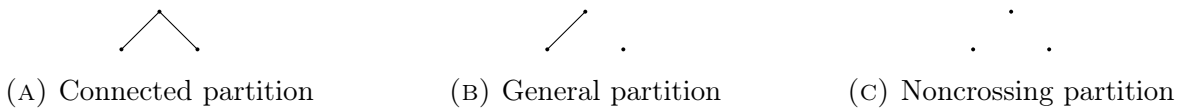
Given $\pi\in P(p)$, we denote $|\pi|$ its number of blocks and $\crblocks(\pi)$ its number of crossing blocks. Therefore, the number of noncrossing blocks of $\pi$ is
\begin{align*}
    \ncrblocks(\pi):=|\pi|-\crblocks(\pi).
\end{align*}

Given $\pi \in P(p)$, we denote $\ccblocks(\pi)$ its number of connected components. We denote $P_2(p), P_2^{\operatorname{bi}}(p)$, $P_2^{\operatorname{con}}(p)$, $P_2^{\operatorname{bicon}}(p)$ and $NC_2(p)$ the set of pair partitions, bipartite pair partitions, connected pair partitions, bipartite connected pair partitions, and noncrossing pair partitions, respectively, that is, those partitions such that all of their blocks have cardinality two.

A pair partition $\pi \in P_2(p)$ can be decomposed into its crossing connected components, namely, let $\hat{\pi} \in P(p)$ be the choice of connected components and, for each block $T \in \hat{\pi}$, draw a connected pair partition $\pi_T \in P_2^{\operatorname{con}}(T)$. By definition, $\hat{\pi} \in NC(p)$ as otherwise two disjoint components would meet ($\hat{\pi}$ is called the noncrossing closure of $\pi$ in \cite{L2002}). The mapping 
\begin{align}\label{equation: mapping pi connected components}
    \Phi:\pi \mapsto (\hat{\pi},(\pi_T)_{T \in \hat{\pi}})
\end{align} 
is a bijection that will be used throughout the proof of Theorem~\ref{th: main theorem}; see Figure \ref{fig: bijection Phi}.
\begin{figure}[H]
    \centering
    \begin{subfigure}[b]{0.4\textwidth}
        \centering
        \begin{tikzpicture}[scale=0.50]
            \draw[-] (1,0) edge (1,1);
            \draw[-] (1,1) edge (3,1);
            \draw[-] (3,1) edge (3,0);

            \draw[-] (2,0) edge (2,2);
            \draw[-] (2,2) edge (4,2);
            \draw[-] (4,2) edge (4,0);

            \draw[-] (5,0) edge (5,2);
            \draw[-] (5,2) edge (8,2);
            \draw[-] (8,2) edge (8,0);

            \draw[-] (6,0) edge (6,1);
            \draw[-] (6,1) edge (7,1);
            \draw[-] (7,1) edge (7,0);
        \end{tikzpicture}
        \caption{$\pi$}
        \label{fig: pi}
    \end{subfigure}
    \begin{subfigure}[b]{0.4\textwidth}
        \centering
        \begin{tikzpicture}[scale=0.50]
            \draw[-] (1,0) edge (1,1);
            \draw[-] (1,1) edge (4,1);
            \draw[-] (3,1) edge (3,0);
            \draw[-] (2,0) edge (2,1);
            \draw[-] (4,1) edge (4,0);

            \draw[-] (5,0) edge (5,2);
            \draw[-] (5,2) edge (8,2);
            \draw[-] (8,2) edge (8,0);

            \draw[-] (6,0) edge (6,1);
            \draw[-] (6,1) edge (7,1);
            \draw[-] (7,1) edge (7,0);
        \end{tikzpicture}
        \caption{$\hat{\pi}$}
        \label{fig: hat pi}
    \end{subfigure}
    \begin{subfigure}[b]{0.4\textwidth}
        \centering
        \begin{tikzpicture}[scale=0.50]
            \draw[-] (1,0) edge (1,1);
            \draw[-] (1,1) edge (3,1);
            \draw[-] (3,1) edge (3,0);

            \draw[-] (2,0) edge (2,2);
            \draw[-] (2,2) edge (4,2);
            \draw[-] (4,2) edge (4,0);
        \end{tikzpicture}
        \caption{$\pi_{\{1,2,3,4\}}$}
        \label{fig: pi_1}
    \end{subfigure}
    \begin{subfigure}[b]{0.4\textwidth}
        \centering
        \begin{tikzpicture}[scale=0.50]
            \draw[-] (1,0) edge (1,1);
            \draw[-] (1,1) edge (2,1);
            \draw[-] (2,1) edge (2,0);
        \end{tikzpicture}
        \caption{$\pi_{\{5,8\}}\cong\pi_{\{6,7\}}$}
        \label{fig: pi_2}
    \end{subfigure}
    \caption{A partition $\pi$ and its image $\Phi(\pi)$.}
    \label{fig: bijection Phi}
\end{figure}
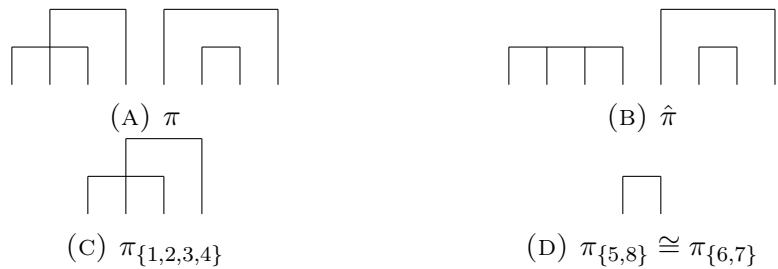
Note, for instance, that $|\hat{\pi}|=\ccblocks(\pi)$. 
We denote
\begin{align*}
    \proj(\hat{\pi}):=\{((\pi_T)_{T \in \hat{\pi}}): \pi_T \in P_2^{\operatorname{con}}(T), \ \forall T \in \hat{\pi}\}.
\end{align*}

Given $a_1,\ldots,a_n \in \A$, we denote by $\cumuf_n(a_1,\ldots,a_n)$ their free cumulants, namely, for any $i \in [n]^k$, we have
\begin{align*}
    &\tau(a_{i_1}\cdots a_{i_k})=\sum_{\pi \in NC(k)}\cumuf_\pi(a_{i_1},\ldots,a_{i_k}),\\
    &\cumuf_\pi(a_{i_1},\ldots,a_{i_k})=\prod_{V=\{v_1<\cdots<v_l\} \in \pi}\cumuf_{|V|}(a_{i_{v_1}},\ldots,a_{i_{v_l}}).
\end{align*}
This is known as the moment-cumulant formula \cite[Notation 11.5]{nica2006lectures}. Here and throughout the paper, we denote $V=\{v_1<\cdots<v_l\}$ a block $V=\{v_1,\ldots,v_l\}$ such that $v_1< \cdots <v_l$; see  \cite[Lecture 11]{nica2006lectures}.
Note also that if the variables $a_1,\ldots,a_n$ are free, the free mixed cumulants vanish \cite[Proposition 11.15]{nica2006lectures}.
We denote $\cumuf_n(a)$ the free cumulants of a random variable $a$. The moment-cumulant formula implies the following, which is going to be used extensively in Subsection \ref{subsection: contribution of connected partitions}; see \cite[Lecture 5, Equation 5.6]{nica2006lectures}.
\begin{lemma}\label{lemma: contribution of ac1ac2}
    Let $a,c_1,c_2$ be variables such that $a$ is free from $\{c_1,c_2\}$. Then
    \begin{align*}
        \tau(ac_1ac_2)=\var(a)\tau(c_1)\tau(c_2)+\tau^2(a)\tau(c_1c_2).
    \end{align*}
\end{lemma}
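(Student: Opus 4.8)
The plan is to expand $\tau(ac_1ac_2)$ via the moment–cumulant formula and then exploit the vanishing of mixed free cumulants, exactly as the reference to \cite[Lecture 5]{nica2006lectures} suggests. Writing the four arguments as $(a,c_1,a,c_2)$ and labelling the slots $1,2,3,4$, I would start from
\begin{align*}
    \tau(ac_1ac_2)=\sum_{\pi \in NC(4)}\cumuf_\pi(a,c_1,a,c_2).
\end{align*}
Here slots $\{1,3\}$ carry the variable $a$, while slots $\{2,4\}$ carry the variables $c_1$ and $c_2$.

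The key structural input is that $a$ is free from $\{c_1,c_2\}$, so every block of $\pi$ that simultaneously contains a slot from $\{1,3\}$ and a slot from $\{2,4\}$ yields a mixed free cumulant, which vanishes by \cite[Proposition 11.15]{nica2006lectures}. Thus only the partitions whose blocks refine the two-colour partition $\{\{1,3\},\{2,4\}\}$ survive. Among these, the partition $\{\{1,3\},\{2,4\}\}$ itself is crossing, hence not in $NC(4)$ and discarded; the surviving noncrossing ones are exactly $\{\{1,3\},\{2\},\{4\}\}$, $\{\{1\},\{3\},\{2,4\}\}$, and the all-singleton partition $\{\{1\},\{2\},\{3\},\{4\}\}$.

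It then remains to read off the three contributions using $\cumuf_1(x)=\tau(x)$, $\cumuf_2(a,a)=\var(a)$ and $\cumuf_2(c_1,c_2)=\tau(c_1c_2)-\tau(c_1)\tau(c_2)$. These give respectively $\var(a)\tau(c_1)\tau(c_2)$, then $\tau^2(a)\big(\tau(c_1c_2)-\tau(c_1)\tau(c_2)\big)$, and finally $\tau^2(a)\tau(c_1)\tau(c_2)$; summing them the cross terms cancel, leaving $\var(a)\tau(c_1)\tau(c_2)+\tau^2(a)\tau(c_1c_2)$, as claimed.

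The only place requiring genuine care — and the step I would expect to be the main (minor) obstacle — is the enumeration: one must remember that being colour-respecting is necessary but not sufficient, since the colour-respecting pairing $\{\{1,3\},\{2,4\}\}$ fails to be noncrossing and must be excluded. As an alternative route that avoids enumeration altogether, I could instead decompose $a=\mathring a+\lambda$ with $\mathring a:=a-\lambda$ centered, expand $\tau(ac_1ac_2)$ into four terms, and use the freeness factorization $\tau(xy)=\tau(x)\tau(y)$ (valid whenever $x$ is free from $y$ and one factor is centered) together with the identity $\tau(\mathring a c_1\mathring a c_2)=\var(a)\tau(c_1)\tau(c_2)$ for centered $\mathring a$; the two mixed terms then vanish and one recovers the same result.
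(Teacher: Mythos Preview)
Your proof is correct. The paper does not actually supply its own argument for this lemma: it merely states that ``the moment-cumulant formula implies the following'' and refers to \cite[Lecture~5, Equation~5.6]{nica2006lectures}. Your main route via the moment--cumulant expansion over $NC(4)$ and the vanishing of mixed cumulants is precisely the derivation that sentence is alluding to, and your enumeration of the three surviving noncrossing colour-respecting partitions is accurate (in particular you correctly discard the crossing pairing $\{\{1,3\},\{2,4\}\}$). Your alternative centering approach, writing $a=\mathring a+\lambda$ and using the definition of freeness directly, is in fact closer to how the cited Lecture~5 of Nica--Speicher establishes the identity, since free cumulants are only introduced there in Lecture~11; either route is fine and both are standard.
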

We will equivalently denote $\cumuf_n(\mu)$ the free cumulants of a random variable $a$ with distribution $\mu$. Given two measures $\mu_a$ and $\mu_b$, the free convolution $\mu_a \boxplus \mu_b$ denotes the distribution of $a+b$, where $a$ and $b$ are free random variables with distribution $\mu_a$ and $\mu_b$, respectively. The classical convolution $\mu_a+\mu_b$ denotes the distribution of $a+b$, where now $a$ and $b$ are classical independent random variables with distribution $\mu_a$ and $\mu_b$, respectively.



\section{Properties of the limiting measure}\label{sec: properties limit}

In this section, we summarize some of the properties of the measure $\mu_q$ appearing in \eqref{eq: limit measure}. 
We start by computing the free cumulants and the moments of $\mu_1$.
\begin{proposition}\label{proposition: cumulants and moments mu_1}
    Let
    \begin{align*}
        \mu_1=\frac{1}{\sqrt{2}}\mu_{sc}+\frac{1}{\sqrt{2}}\mu_{sc}.
    \end{align*}
    Then the following hold.
    \begin{enumerate}
        \item Its odd moments vanish and, for every $p \in \N$, its $2p$-th moment is given by
        \begin{align*}
            \int x^{2p}\, \text{d}\mu_1&=2^{-p}\sum_{l=0}^p\binom{2p}{2l}C_lC_{p-l} =2^{-p}\sum_{\pi \in P_2^{\operatorname{bi}}(2p)}2^{\ccblocks(\pi)},
        \end{align*}
        where we recall that $C_l$ denotes the $l$-th Catalan number.
        \item Its odd free cumulants vanish, $\cumuf_2(\mu_1)=1$ and for any even integer $n \ge 4$, we have
        \begin{align*}
            \cumuf_n(\mu_1)=2\left(\frac{1}{2}\right)^{n/2}|P_2^{\operatorname{bicon}}(n)|.
        \end{align*}
    \end{enumerate}
\end{proposition}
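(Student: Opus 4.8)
The plan is to treat the two items separately: derive the moments directly from the probabilistic description of $\mu_1$, then invert the moment--cumulant relation to obtain the free cumulants.

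For the moments in item (1), I would realize $\mu_1$ as the law of $X_1+X_2$, where $X_1,X_2$ are classically independent, each distributed as $\tfrac{1}{\sqrt 2}$ times a standard semicircular variable. Since the $2l$-th moment of the standard semicircle is the Catalan number $C_l$ while its odd moments vanish, the binomial expansion together with independence gives
\[
\int x^{2p}\,\mathrm{d}\mu_1 = \esp\!\left[(X_1+X_2)^{2p}\right] = 2^{-p}\sum_{l=0}^{p}\binom{2p}{2l}C_l\,C_{p-l},
\]
and every odd power contains a factor with an odd semicircular moment, hence vanishes. There remains the combinatorial identity $\sum_{l=0}^{p}\binom{2p}{2l}C_l C_{p-l} = \sum_{\pi \in P_2^{\operatorname{bi}}(2p)} 2^{\ccblocks(\pi)}$. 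Here I would use that $2^{\ccblocks(\pi)}$ is exactly the number of proper $2$-colorings of the bipartite intersection graph $G(\pi)$, so the right-hand side enumerates pairs (bipartite pair partition, proper $2$-coloring of its blocks). Such a colored object is the same as a splitting $S \sqcup S^c = [2p]$ carrying a noncrossing pair partition on each part: within one color class no two blocks cross (so the class is noncrossing), and conversely the union of two noncrossing pair partitions on complementary supports is automatically bipartite with the given coloring proper. Counting by $|S|=2l$ produces $\binom{2p}{2l}$ choices of support and $C_l$, $C_{p-l}$ noncrossing pair partitions, which yields the identity.

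For the free cumulants in item (2), I would reorganize the moment sum through the bijection $\Phi$ of \eqref{equation: mapping pi connected components}. Writing $\pi \in P_2^{\operatorname{bi}}(2p)$ as $\Phi(\pi)=(\hat\pi,(\pi_T)_{T\in\hat\pi})$, the intersection graph splits as a disjoint union $G(\pi)=\bigsqcup_{T\in\hat\pi}G(\pi_T)$, so $\pi$ is bipartite precisely when each connected component $\pi_T$ lies in $P_2^{\operatorname{bicon}}(T)$, and $\ccblocks(\pi)=|\hat\pi|$. Substituting and factoring $2^{-p}=\prod_{T}2^{-|T|/2}$ and $2^{|\hat\pi|}=\prod_{T}2$ converts the moment into a sum over $\hat\pi\in NC(2p)$ of a product over blocks,
\[
\int x^{2p}\,\mathrm{d}\mu_1 = \sum_{\hat\pi\in NC(2p)}\ \prod_{T\in\hat\pi}\left(2\cdot 2^{-|T|/2}\,|P_2^{\operatorname{bicon}}(|T|)|\right),
\]
where odd-sized blocks contribute $0$ since they admit no pair partition. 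Comparing with the moment--cumulant formula $\int x^{n}\,\mathrm{d}\mu_1=\sum_{\pi\in NC(n)}\prod_{V\in\pi}\cumuf_{|V|}(\mu_1)$ and invoking uniqueness of the solution to this triangular system, I would read off $\cumuf_n(\mu_1)=0$ for odd $n$ and $\cumuf_n(\mu_1)=2(1/2)^{n/2}|P_2^{\operatorname{bicon}}(n)|$ for even $n$; the value $|P_2^{\operatorname{bicon}}(2)|=1$ then gives $\cumuf_2(\mu_1)=1$.

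The main obstacle I anticipate is the bookkeeping in the second part: one must verify carefully that the intersection graph decomposes as a genuine disjoint union over connected components, so that both bipartiteness and the colouring weight factor multiplicatively, and that the resulting per-block weight depends only on the block size, so that the uniqueness of the moment--cumulant inversion legitimately delivers the cumulants. The combinatorial identity in item (1) is comparatively routine once the $2$-coloring interpretation of $2^{\ccblocks(\pi)}$ is in place.
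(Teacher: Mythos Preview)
Your proposal is correct and follows essentially the same approach as the paper: the binomial expansion for the moment formula, the interpretation of $2^{\ccblocks(\pi)}$ as the number of ways to split the blocks into two mutually noncrossing families (what the paper calls \emph{left-right noncrossing representations}, and you call proper $2$-colorings), and then the use of the bijection~$\Phi$ to factor the moment sum into the noncrossing moment--cumulant form. The only cosmetic difference is that the paper singles out the size-two blocks explicitly (they contribute $1$), whereas you observe that the unified weight $2\cdot 2^{-|T|/2}\,|P_2^{\operatorname{bicon}}(|T|)|$ already equals $1$ when $|T|=2$; both are fine.
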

\begin{proof}
    Let $x_1,x_2$ be two classical i.i.d semi-circle random variables. Then
    \begin{align*}
        \int x^{2p}\, \text{d}\mu_1=2^{-p}\tau\left((x_1+x_2)^{2p}\right).
    \end{align*}
    Since the variables are classical independent, and in particular, they commute, we get
    \begin{align*}
        \tau\left((x_1+x_2)^{2p}\right)=2^{-p}\sum_{l=0}^{2p}\binom{2p}{l}\tau(x_1^{l})\tau(x_2^{2p-l}).
    \end{align*}
    The odd moments of $x_1$ vanish and, for any $l \ge 1$, $\tau(x_1^{2l})=C_l$, hence
    \begin{align*}
        \int x^{2p}\, \text{d}\mu_1=2^{-p}\sum_{l=0}^p\binom{2p}{2l}C_lC_{p-l}.
    \end{align*}
    Now consider the sequence $m_p$ given by
    \begin{align*}
        m_p:=2^{-p}\sum_{\pi \in P_2^{\operatorname{bi}}(2p)}2^{\ccblocks(\pi)}.
    \end{align*}
    For every bipartite pair partition $\pi$, we can decompose it into its bipartite sets. Namely, let $\mathcal{V}_1=\{V_1,\ldots, V_m\}$ and $\mathcal{V}_2=\{V_{m+1},\ldots, V_p\}$ be the bipartition of its blocks in two disjoint families of vertices. Then, the blocks $V_1,\ldots,V_m$ are noncrossing from one another, and so are $V_{m+1},\ldots, V_p$. Let
    \begin{align*}
        I=\bigcup_{i=1}^m V_i \subseteq [2p].
    \end{align*}
    Then
    \begin{align*}
        &\pi_1:=\{V_1,\ldots,V_m\} \in NC_2\left(I\right);\\
        &\pi_2:=\{V_{m+1},\ldots,V_p\} \in NC_2\left(I^c\right).
    \end{align*}
    We denote $(\pi_1,\pi_2,I,I^c)$ a left-right noncrossing representation of $\pi$. We note that if $(\pi_1,\pi_2,I,I^c)$ is a left-right noncrossing representation of $\pi$, so is $(\pi_2,\pi_1,I^c,I)$. Let $R(\pi)$ be the set of all left-right noncrossing representations of $\pi$. We note that $|R(\pi)|$ corresponds to the number of ways the vertices of $G(\pi)$ can be split into two independent families of vertices. Therefore, it is clear that
    \begin{align*}
        |R(\pi)|=2^{\ccblocks(\pi)}.
    \end{align*}
    Hence
    \begin{align}\label{eq: sum over bipartite partitions and representations}
        \sum_{\pi \in P_2^{\operatorname{bi}}(2p)}2^{\ccblocks(\pi)}=\sum_{\pi \in P_2^{\operatorname{bi}}(2p)}|R(\pi)|.
    \end{align}
    Since $R(\pi)$ is the set of all left-right noncrossing representations of $\pi$ and we sum over all $\pi \in P_2^{\operatorname{bi}}(2p)$, we get
    \begin{align*}
        \sum_{\pi \in P_2^{\operatorname{bi}}(2p)}|R(\pi)|=|\{(\pi_1,\pi_2,I,I^c): I \subseteq[2p], \pi_1 \in NC_2(I),\pi_2 \in NC_2(I^c)\}|.
    \end{align*}
    Note that for fixed $I \subseteq [2p]$, each $(\pi_1,\pi_2) \in NC_2(I) \times NC_2(I^c)$ will correspond to a unique partition $\pi \in P_2^{\operatorname{bi}}(2p)$ such that $(\pi_1,\pi_2,I,I^c)$ is a left-right noncrossing representation of $\pi$. Hence we can first sum over $I\subseteq [2p]$ so that 
    \begin{align*}
        \sum_{\pi \in P_2^{\operatorname{bi}}(2p)}|R(\pi)|=\sum_{I \subseteq [2p]}|\{(\pi_1,\pi_2): \pi_1 \in NC_2(I), \pi_2 \in NC_2(I^c)\}|.
    \end{align*}
    Therefore, by \eqref{eq: sum over bipartite partitions and representations}, we have
    \begin{align*}
        \sum_{\pi \in P_2^{\operatorname{bi}}(2p)}2^{\ccblocks(\pi)}=\sum_{I \subseteq [2p]}|\{(\pi_1,\pi_2): \pi_1 \in NC_2(I), \pi_2 \in NC_2(I^c)\}|.
    \end{align*}
    The latter cardinality can be written as a product over the cardinals and only depends on the length of $I$. More precisely, we have
    \begin{align*}
        \sum_{\pi \in P_2^{\operatorname{bi}}(2p)}2^{\ccblocks(\pi)}=\sum_{l=0}^p \binom{2p}{2l}C_lC_{p-l},
    \end{align*}
    where the cardinal $|NC_2(2l)|$ is equal to $C_l$. Hence
    \begin{align*}
        \int x^{2p}\, \text{d}\mu_1=m_p,
    \end{align*}
    and the first statement follows. For the second, we use the bijection $\Phi$ in \eqref{equation: mapping pi connected components} to write $\Phi(\pi)=\left(\hat{\pi},(\pi_T)_{T \in \hat{\pi}}\right)$. We note that
    \begin{align*}
        &\sum_{\substack{T \in \hat{\pi}\\ |T| \ge 4}}1=\ccblocks(\pi)-\ncrblocks(\pi),\\
        &\sum_{\substack{T \in \hat{\pi}\\ |T| \ge 4}}\left(\frac{|T|}{2}\right)=\crblocks(\pi).
    \end{align*}
    Hence
    \begin{align*}
        \sum_{\pi \in P_2^{\operatorname{bi}}(2p)}2^{\ccblocks(\pi)-p}&=\sum_{\pi \in P_2^{\operatorname{bi}}(2p)}2^{\ccblocks(\pi)-\ncrblocks(\pi)}\left(\frac{1}{2}\right)^{\crblocks(\pi)}\\
        &=\sum_{\hat{\pi} \in NC(2p)}\sum_{\substack{T \in \hat{\pi}\\ \pi_T \in P_2^{\operatorname{bicon}}(T)}}\prod_{\substack{T \in \hat{\pi}\\ |T| \ge 4}}2 \prod_{\substack{T \in \hat{\pi}\\ |T| \ge 4}}\left(\frac{1}{2}\right)^{|T|/2},
    \end{align*}
    where we use that $\ncrblocks(\pi)+\crblocks(\pi)=p$ in the first equality. We thus have
    \begin{align*}
        \sum_{\pi \in P_2^{\operatorname{bi}}(2p)}2^{\ccblocks(\pi)-p}=\sum_{\hat{\pi} \in NC(2p)}\prod_{\substack{T \in \hat{\pi}\\ |T| \ge 4}}\left\{2\left(\frac{1}{2}\right)^{|T|/2}|P_2^{\operatorname{bicon}}(|T|)|\right\}.
    \end{align*}
    Since the free cumulants $\cumuf_n(\mu_1)$ are uniquely characterized by the moments, it follows that for any odd integer $n$, we have $\cumuf_n(\mu_1)=0$, $\cumuf_2(\mu_1)=1$ and for any even integer $n \ge 4$, we have
    \begin{align*}
        \cumuf_n(\mu_1)=2\left(\frac{1}{2}\right)^{n/2}|P_2^{\operatorname{bicon}}(n)|.
    \end{align*}
\end{proof}
\begin{remark}
    For any $p \ge 1$, it was shown in \cite{gouyoubeauchamps1986} that
    \begin{align*}
        \sum_{l=0}^p \binom{2p}{2l}C_lC_{p-l}=C_pC_{p+1}.
    \end{align*}
    Therefore, the $2p$-th moment of $\mu_1$ is also characterized by $2^{-p}C_pC_{p+1}$. The sequence $(C_pC_{p+1})_{p \ge 1}$ is A005568 in Sloane's encyclopedia (\url{https://oeis.org/A005568}), where several combinatorial objects counted by it are shown.
\end{remark}

We are ready to compute the moments and free cumulants of $\mu_q$.
\begin{proposition}\label{prop: moments of mu_q}
Given $q\in [0,1]$, let   
    \begin{equation*}
        \mu_q:=\sqrt{q}\left(\frac{1}{\sqrt{2}}\mu_{sc}+\frac{1}{\sqrt{2}}\mu_{sc}\right)\boxplus\sqrt{1-q}\, \mu_{sc}.
    \end{equation*}
Then the following hold.
    \begin{enumerate}
        \item The odd free cumulants of $\mu_q$ vanish, $\cumuf_2(\mu_q)=1$ and for any even integer $n \ge 4$, we have
        \begin{align*}
            \cumuf_n(\mu_q)=2\Big(\frac{q}{2}\Big)^{n/2}|P_2^{\operatorname{bicon}}(n)|.
        \end{align*}
         \item The odd moments of $\mu_q$ vanish and, for every $p\in \N$, its $2p$-th moment is given by  
        \begin{align*}
       \sum_{\pi \in P_2^{\operatorname{bi}}(2p)}2^{\ccblocks(\pi)-p}q^{\crblocks(\pi)}.
        \end{align*}
    \end{enumerate}
\end{proposition}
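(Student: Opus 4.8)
The plan is to derive both statements from Proposition~\ref{proposition: cumulants and moments mu_1} by exploiting how free cumulants behave under dilation and free convolution. Recall that $\mu_q=\sqrt{q}\,\mu_1\boxplus\sqrt{1-q}\,\mu_{sc}$, where $\mu_1=\frac{1}{\sqrt 2}\mu_{sc}+\frac{1}{\sqrt 2}\mu_{sc}$. Two elementary facts about free cumulants are the key tools: first, free convolution linearizes them, so $\cumuf_n(\mu\boxplus\nu)=\cumuf_n(\mu)+\cumuf_n(\nu)$; second, dilation scales them homogeneously, $\cumuf_n(t\mu)=t^n\cumuf_n(\mu)$, since if $a$ has distribution $\mu$ then $ta$ has moments scaled by $t^n$ and the moment--cumulant formula forces this on the cumulant side. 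Combining these,
\begin{align*}
\cumuf_n(\mu_q)=q^{n/2}\cumuf_n(\mu_1)+(1-q)^{n/2}\cumuf_n(\mu_{sc}).
\end{align*}

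For the first statement I would simply plug in the known cumulants. The semi-circle law has $\cumuf_2(\mu_{sc})=1$ and all other free cumulants vanishing, while Proposition~\ref{proposition: cumulants and moments mu_1} gives the cumulants of $\mu_1$. For odd $n$ both terms vanish, so $\cumuf_n(\mu_q)=0$. For $n=2$ we get $\cumuf_2(\mu_q)=q\cdot 1+(1-q)\cdot 1=1$. For even $n\ge 4$ the semi-circle contribution is zero and, using $\cumuf_n(\mu_1)=2(1/2)^{n/2}|P_2^{\operatorname{bicon}}(n)|$, we obtain
\begin{align*}
\cumuf_n(\mu_q)=q^{n/2}\cdot 2\Big(\frac{1}{2}\Big)^{n/2}|P_2^{\operatorname{bicon}}(n)|=2\Big(\frac{q}{2}\Big)^{n/2}|P_2^{\operatorname{bicon}}(n)|,
\end{align*}
which is exactly the claimed formula.

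For the second statement I would feed these cumulants into the moment--cumulant formula $\int x^{2p}\,\mathrm{d}\mu_q=\sum_{\sigma\in NC(2p)}\cumuf_\sigma(\mu_q)$ and reorganize the sum into a sum over bipartite pair partitions using the bijection $\Phi$ from \eqref{equation: mapping pi connected components}. Since only even cumulants survive, only $\sigma\in NC(2p)$ whose blocks all have even size contribute; each such block $T$ of size $\ge 4$ contributes a factor $2(q/2)^{|T|/2}|P_2^{\operatorname{bicon}}(|T|)|$ and each block of size $2$ contributes $\cumuf_2=1$. Interpreting the factor $|P_2^{\operatorname{bicon}}(|T|)|$ as a sum over bipartite connected pair partitions on $T$, and recombining the noncrossing closure $\hat\sigma=\sigma$ with the connected pieces $\pi_T$ via $\Phi^{-1}$, exactly mirrors the computation already carried out in the proof of Proposition~\ref{proposition: cumulants and moments mu_1}, but now carrying the extra weight $q$. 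Tracking the exponents as in that proof, namely $\sum_{|T|\ge 4}1=\ccblocks(\pi)-\ncrblocks(\pi)$ and $\sum_{|T|\ge 4}(|T|/2)=\crblocks(\pi)$ together with $\ncrblocks(\pi)+\crblocks(\pi)=p$, the accumulated power of $q$ becomes $q^{\crblocks(\pi)}$ and the power of $2$ becomes $2^{\ccblocks(\pi)-p}$, yielding the stated $\sum_{\pi\in P_2^{\operatorname{bi}}(2p)}2^{\ccblocks(\pi)-p}q^{\crblocks(\pi)}$; the odd moments vanish because all surviving cumulants and hence all contributing partitions require an even number of points.

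The main obstacle is purely bookkeeping rather than conceptual: one must verify that the $q$-exponent accumulated through the $\Phi$-decomposition is precisely $\crblocks(\pi)$. The cleanest route is to observe that the entire argument is identical to the $\mu_1$ computation except that each connected component $T$ of size $\ge 4$ now carries $(q/2)^{|T|/2}$ instead of $(1/2)^{|T|/2}$; since $\sum_{|T|\ge 4}(|T|/2)=\crblocks(\pi)$, the combined $q$-power telescopes to $q^{\crblocks(\pi)}$ while the $2$-powers reproduce the $2^{\ccblocks(\pi)-p}$ factor exactly as before. I would therefore present the second statement as a direct adaptation of the already-established moment formula for $\mu_1$, inserting the weight $q$ and citing the same exponent identities, so that no new combinatorial bijection needs to be proved from scratch.
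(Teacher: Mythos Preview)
Your proposal is correct and follows essentially the same approach as the paper: the paper computes the cumulants via the $R$-transform identity $R_{\mu_q}(z)=R_{\mu_1}(\sqrt{q}\,z)+R_{\mu_{sc}}(\sqrt{1-q}\,z)$, which is exactly the generating-function encoding of your two facts (additivity under $\boxplus$ and homogeneity under dilation), and then derives the moment formula from the moment--cumulant relation together with the bijection $\Phi$ and the same exponent identities $\sum_{|T|\ge 4}1=\ccblocks(\pi)-\ncrblocks(\pi)$, $\sum_{|T|\ge 4}|T|/2=\crblocks(\pi)$ that you invoke.
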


\begin{proof}
    We will use the notion of $R$-transform; see \cite[Lecture 16]{nica2006lectures}. For a random variable $a \in \A$ with distribution $\mu$, let
\begin{align*}
    R_a(z)=R_\mu(z)=\sum_{n \ge 1}\cumuf_n(a) z^n,
\end{align*}
be its $R$-transform, defined as a formal series. 
The $R$-transform of a standard semi-circle law is given by
\begin{align*}
    R_{\mu_{sc}}(z)=z^2,
\end{align*}
whereas Proposition \ref{proposition: cumulants and moments mu_1} shows that the $R$-transform of $\mu_1$ is given by
\begin{align*}
    R_{\mu_1}(z)=z^2+2\sum_{n \ge 2}\left(\frac{1}{2}\right)^{n}|P_2^{\operatorname{bicon}}(2n)|z^{2n}.
\end{align*}
Since the free cumulants linearize the free convolution, we deduce that 
\begin{align*}
R_{\mu_q}(z)&= R_{\sqrt{q}\mu_1}(z)+ R_{\sqrt{1-q} \,\mu_{sc}}(z) \\
& = R_{\mu_1}\big(\sqrt{q}\,z\big) + R_{\mu_{sc}}\big(\sqrt{1-q} \,z\big)\\
&= z^2+ 2\sum_{n\ge 2} \Big(\frac{q}{2}\Big)^{n}|P_2^{\operatorname{bicon}}(2n)|z^{2n}.
\end{align*}
This proves the first part of the proposition. 
To prove the second part, we use the moment-cumulant formula to deduce that the odd moments vanish, while the $2p$-th moment can be expressed as
\begin{align*}
\sum_{\hat{\pi}\in NC(2p)}\prod_{T \in \hat{\pi}} \cumuf_{|T|}(\mu_q)
&=\sum_{\hat{\pi}\in NC(2p)}\prod_{\underset{|T|\geq 4}{T \in \hat{\pi}}}\left\{ 2\Big(\frac{q}{2}\Big)^{|T|/2}|P_2^{\operatorname{bicon}}(T)|\right\}\\
&=\sum_{\hat{\pi}\in NC(2p)}\sum_{(\pi_T)_{T \in \hat{\pi}}}\prod_{\underset{|T|\geq 4}{T \in \hat{\pi}}} \left\{2\Big(\frac{q}{2}\Big)^{|T|/2}\right\},\\
\end{align*}
where the second summation is over $(\pi_T)_{T \in \hat{\pi}} \in \proj(\hat{\pi})$ such that $\pi_T \in P_2^{\operatorname{bicon}}(T)$ for $T \in \hat{\pi}$.
Now noting that 
\begin{align*}
    &|\{T \in \hat{\pi}:|T| \ge 4\}|=\ccblocks(\pi)-\ncrblocks(\pi), 
\end{align*}
and
$$
\sum_{\substack{T \in \hat{\pi}\\ |T| \ge 4}}\frac{|T|}{2}=\crblocks(\pi),
$$
we finish the proof after using the bijection $\Phi$ from \eqref{equation: mapping pi connected components} to rewrite the above expression.
\end{proof}



\section{Existence of the limit}\label{sec: existence}

The goal of this section is to show that the expression in \eqref{eq: intro-tensor-setting} admits a limit that depends only on the first and second moments of the variables at hand. 
Let us first prove the following centering lemma.
\begin{lemma}\label{lemma: centering}
    Let $(\A,\tau)$ be a unital faithful tracial noncommutative probability space. Let $\{a_k: k \in [n]\},\{d_k: k \in [n]\} \subset \A$ be two collections of free random variables and let
    \begin{align*}
        c_k:=a_k\otimes d_k-\tau\otimes \tau(a_k\otimes d_k),
    \end{align*}
    for every $k\in[n]$. Then for any $m \ge 1$ and $i_1,\ldots,i_m \in [n]$ such that there exists an index $l^* \in [m]$ satisfying $i_{l^*} \ne i_j$ for all $j \ne l^*$, we have
    \begin{align*}
        \tau\otimes \tau(c_{i_1}\cdots c_{i_{m}})=0.
    \end{align*}
    In particular, we have
    \begin{align*}
        \tau\otimes \tau\left(c_{i_1}\cdots c_{i_{l^*-1}}a_{i_{l^*}}\otimes d_{i_{l^*}} c_{i_{l^*+1}}\cdots c_{i_m} \right)=\tau(a_{i_{l^*}})\tau(d_{i_{l^*}})\tau\otimes \tau(c_{i_1}\cdots c_{i_{l^*-1}} c_{i_{l^*+1}}\cdots c_{i_m}).
    \end{align*}
\end{lemma}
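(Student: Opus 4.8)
The plan is to first establish the factorization identity (the ``in particular'' assertion) and then deduce the vanishing of $\tau\otimes\tau(c_{i_1}\cdots c_{i_m})$ as an immediate consequence. Throughout, I would write $L := c_{i_1}\cdots c_{i_{l^*-1}}$ and $R := c_{i_{l^*+1}}\cdots c_{i_m}$, with the convention that $L$ or $R$ equals the unit when empty (the cases $l^*=1$ or $l^*=m$), so that $c_{i_1}\cdots c_{i_m} = L\, c_{i_{l^*}}\, R$.

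First I would record the elementary freeness fact: if $a$ is free from $\{b_1,b_2\}$, then $\tau(b_1 a b_2) = \tau(a)\tau(b_1 b_2)$. This follows by writing $a = \tau(a)\1 + \mathring a$ with $\mathring a := a - \tau(a)\1$ centered, and checking that $\tau(b_1 \mathring a b_2) = 0$: after also centering $b_1$ and $b_2$, every surviving term is an alternating product of centered elements drawn from the two free subalgebras, hence has vanishing trace by the definition of freeness.

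The main computation then proves the factorization. Since each $c_k = a_k\otimes d_k - \tau(a_k)\tau(d_k)\1\otimes\1$, the word $L$ expands into a finite sum of simple tensors $L = \sum_\alpha A_\alpha\otimes B_\alpha$, where each $A_\alpha$ is a word in $\{a_k : k\neq i_{l^*}\}$ and each $B_\alpha$ is a word in $\{d_k : k\neq i_{l^*}\}$; crucially, the uniqueness hypothesis $i_{l^*}\neq i_j$ for $j\neq l^*$ guarantees that the index $i_{l^*}$ occurs in neither leg. Writing likewise $R = \sum_\beta A'_\beta\otimes B'_\beta$, multiplicativity of the tensor product gives
\begin{align*}
L\,(a_{i_{l^*}}\otimes d_{i_{l^*}})\,R = \sum_{\alpha,\beta} (A_\alpha\, a_{i_{l^*}}\, A'_\beta)\otimes (B_\alpha\, d_{i_{l^*}}\, B'_\beta).
\end{align*}
Applying $\tau\otimes\tau$ and invoking the freeness fact on each leg separately --- $a_{i_{l^*}}$ is free from $\{A_\alpha, A'_\beta\}$ and $d_{i_{l^*}}$ from $\{B_\alpha, B'_\beta\}$ because all these words avoid the index $i_{l^*}$ --- each summand becomes $\tau(a_{i_{l^*}})\tau(A_\alpha A'_\beta)\,\tau(d_{i_{l^*}})\tau(B_\alpha B'_\beta)$. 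Factoring out $\tau(a_{i_{l^*}})\tau(d_{i_{l^*}})$ and recognizing $\sum_{\alpha,\beta}\tau(A_\alpha A'_\beta)\tau(B_\alpha B'_\beta) = \tau\otimes\tau(LR)$ yields
\begin{align*}
\tau\otimes\tau\big(L\,(a_{i_{l^*}}\otimes d_{i_{l^*}})\,R\big) = \tau(a_{i_{l^*}})\tau(d_{i_{l^*}})\,\tau\otimes\tau(LR),
\end{align*}
which is exactly the ``in particular'' assertion. The vanishing statement then follows at once by substituting $c_{i_{l^*}} = a_{i_{l^*}}\otimes d_{i_{l^*}} - \tau(a_{i_{l^*}})\tau(d_{i_{l^*}})\1\otimes\1$:
\begin{align*}
\tau\otimes\tau(L\,c_{i_{l^*}}\,R) = \tau\otimes\tau\big(L\,(a_{i_{l^*}}\otimes d_{i_{l^*}})\,R\big) - \tau(a_{i_{l^*}})\tau(d_{i_{l^*}})\,\tau\otimes\tau(LR) = 0.
\end{align*}

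I expect the only genuine subtlety to be the bookkeeping that isolates the unique index: one must argue that freeness may be applied \emph{leg by leg}, which is legitimate precisely because the hypothesis forces $a_{i_{l^*}}$ (respectively $d_{i_{l^*}}$) to be free from everything occurring in $L$ and $R$ on the $a$-leg (respectively the $d$-leg). Were the uniqueness assumption dropped, the index $i_{l^*}$ would recur inside $L$ or $R$ and this leg-wise freeness would break down; so the entire argument hinges on using $i_{l^*}\neq i_j$ ($j\neq l^*$) to keep the two legs genuinely free from the inserted factor.
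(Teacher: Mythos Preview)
Your proof is correct and follows essentially the same route as the paper's: expand the flanking products into simple tensors and apply freeness leg-by-leg to factor out $\tau(a_{i_{l^*}})\tau(d_{i_{l^*}})$. The only organizational differences are that the paper first invokes cyclicity of the trace to reduce to $l^*=1$ (so only a single flanking word needs to be expanded) and establishes the vanishing before the factorization, whereas you keep $l^*$ general, prove the factorization first via the identity $\tau(b_1 a b_2)=\tau(a)\tau(b_1b_2)$, and deduce the vanishing from it.
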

\begin{proof}
    By cyclicity of the trace, we can assume that $l^*=1$. We write
    \begin{align}\label{eq: trace of c_ij}
        \tau\otimes \tau(c_{i_1}\cdots c_{i_m})=\tau\otimes \tau(a_{i_1}\otimes d_{i_1}c_{i_2}\cdots c_{i_m})-\tau\otimes \tau(a_{i_1}\otimes d_{i_1})\tau\otimes \tau(c_{i_2}\cdots c_{i_m}).
    \end{align}
    For the first term, we open the expressions for $c_{i_j}$ for $j \ge 2$ to get that
    \begin{align*}
        &\tau\otimes \tau(a_{i_1}\otimes d_{i_1}c_{i_2}\cdots c_{i_m}) \\
        &=\sum_{I\subseteq \{2,\ldots,m\}}(-1)^{|I|}\prod_{j \in I}\left(\tau\otimes \tau(a_{i_j}\otimes d_{i_j}) \right)\tau\otimes \tau\left(a_{i_1}\otimes d_{i_1}\prod_{j \in J^c}^{\to}a_{i_j}\otimes d_{i_j}\right),
    \end{align*}
    where $\overset{\to}{\prod}$ denotes the product respecting the ordering. Since $a_{i_1},d_{i_1}$ are free from $a_{i_j},d_{i_j}$ for all $j \ge 2$, freeness implies that
    \begin{align*}
         &\tau\otimes \tau(a_{i_1}\otimes d_{i_1}c_{i_2}\cdots c_{i_m})\\
         &=\tau\otimes \tau(a_{i_1}\otimes d_{i_1})\sum_{I\subseteq \{2,\ldots,m\}}(-1)^{|I|}\prod_{j \in I}\left(\tau\otimes \tau(a_{i_j}\otimes d_{i_j})\right) \tau\otimes \tau\left(\prod_{j \in J^c}^{\to}a_{i_j}\otimes d_{i_j}\right).
    \end{align*}
    The summation can be easily written as $\tau\otimes \tau(c_{i_2}\cdots c_{i_m})$, hence
    \begin{align*}
        \tau\otimes \tau(a_{i_1}\otimes d_{i_1}c_{i_2}\cdots c_{i_m})=\tau\otimes \tau(a_{i_1}\otimes d_{i_1})\tau\otimes \tau(c_{i_2}\cdots c_{i_m}).
    \end{align*}
    By \eqref{eq: trace of c_ij}, we then have
    \begin{align*}
        \tau\otimes \tau(c_{i_1}\cdots c_{i_m})=0.
    \end{align*}
    Finally, since $\tau\otimes \tau(a_{i_1}\otimes d_{i_1})=\tau(a_{i_1})\tau(d_{i_1})$, we get that
    \begin{align*}
        \tau\otimes \tau(a_{i_1}\otimes d_{i_1}c_{i_2}\cdots c_{i_m})&=\tau\otimes \tau(a_{i_1}\otimes d_{i_1})\tau\otimes \tau(c_{i_2}\cdots c_{i_m})\\
        &=\tau(a_{i_1})\tau(d_{i_1})\tau\otimes \tau(c_{i_2}\cdots c_{i_m}).
    \end{align*}
\end{proof}

We are now ready to prove the existence of the limit and that it only depends on the first and second moments of the $a_k$'s.

\begin{proposition}[Existence]\label{proposition: existence of the limit}
    Let $(a_n)_{n\in \N} \in (\A,\tau)$ be self-adjoint free i.i.d random variables with mean $\lambda$,  variance $\sigma^2$, and denote $\delta^2:=\var(a_1\otimes a_1)= \sigma^2(\sigma^2+2\lambda^2)$. For every $k\in \N$, denote 
    \begin{align*}
        b_k:=\frac{1}{\delta}\big(a_k\otimes a_k-\tau\otimes \tau(a_k\otimes a_k)\big),
    \end{align*}
    and for every $n\in\N$, denote
    \begin{align*}
        S_n:=\frac{1}{\sqrt{n}}\sum_{k \in [n]}b_k.
    \end{align*}
    Then there exists a random variable ${\bf{S}} \in (\A',\tau')$ such that $S_n\Rightarrow {\bf{S}}$. Moreover, the law of ${\bf{S}}$ depends only on $\lambda$ and $\sigma$, its odd moments vanish and, for every $p\in\N$, 
    \begin{align*}
        \tau'({\bf{S}}^{2p})=\sum_{\pi \in P_2(2p)}\tau\otimes \tau(b_{i_1}\cdots b_{i_{2p}}),
    \end{align*}
    where  $i \in [p]^{2p}$ is any sequence such that $i_j=i_k$ if and only $\{j,k\} \in \pi$.
\end{proposition}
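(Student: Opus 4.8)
The plan is to use the moment method, grouping the indices appearing in $\tau\otimes\tau(S_n^m)$ according to the partition they induce. First I would expand
\[
\tau\otimes\tau(S_n^m)=\frac{1}{n^{m/2}}\sum_{i\in[n]^m}\tau\otimes\tau(b_{i_1}\cdots b_{i_m})
=\frac{1}{n^{m/2}}\sum_{\pi\in P(m)}\ \sum_{\substack{i\in[n]^m\\ \ker(i)=\pi}}\tau\otimes\tau(b_{i_1}\cdots b_{i_m}),
\]
where $\ker(i)$ is the partition of $[m]$ whose blocks are the level sets $j\sim k \iff i_j=i_k$. Since the family $(a_k)_k$ is free and i.i.d., it is exchangeable, so the value $\tau\otimes\tau(b_{i_1}\cdots b_{i_m})$ depends only on $\pi=\ker(i)$; denote it $T(\pi)$. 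Moreover the number of tuples $i$ with $\ker(i)=\pi$ equals $n(n-1)\cdots(n-|\pi|+1)\sim n^{|\pi|}$, so the contribution of $\pi$ is asymptotic to $n^{|\pi|-m/2}\,T(\pi)$.

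The next step is to discard most partitions. If $\pi$ has a singleton block $\{l^*\}$, then $i_{l^*}\neq i_j$ for all $j\neq l^*$, and Lemma~\ref{lemma: centering} (applied with $d_k=a_k$) gives $T(\pi)=0$. Hence only partitions all of whose blocks have size at least $2$ contribute, and for those $|\pi|\le m/2$. Consequently $n^{|\pi|-m/2}\to 0$ whenever $|\pi|<m/2$ and $n^{|\pi|-m/2}\to 1$ whenever $|\pi|=m/2$, so no term diverges. When $m=2p+1$ is odd, every partition with all blocks of size $\ge 2$ satisfies $|\pi|\le p<m/2$, whence $\tau\otimes\tau(S_n^m)\to 0$ and the odd moments vanish. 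When $m=2p$ is even, the only partitions achieving $|\pi|=p$ with all blocks of size $\ge 2$ are the pair partitions $\pi\in P_2(2p)$, while all other surviving partitions contribute $0$ in the limit, giving
\[
\tau\otimes\tau(S_n^{2p})\longrightarrow \sum_{\pi\in P_2(2p)}T(\pi)=\sum_{\pi\in P_2(2p)}\tau\otimes\tau(b_{i_1}\cdots b_{i_{2p}}),
\]
with $i$ any representative of $\pi$, which is the claimed formula.

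To see that each limiting moment depends only on $\lambda$ and $\sigma$, I would fix $\pi\in P_2(2p)$, so that each index appears exactly twice, and write $b_k=\delta^{-1}(a_k\otimes a_k-\lambda^2\,\1\otimes\1)$. Expanding the product and using that $\tau\otimes\tau$ factorizes over the two legs, every resulting term is a product of $(-\lambda^2)$'s times a factor $\tau\big(\prod_{j\in I}a_{i_j}\big)^2$ over an ordered subset $I$ of positions, in which each index still appears at most twice. By freeness the moment–cumulant formula leaves only single–variable free cumulants, and since each variable occurs at most twice these are exactly $\cumuf_1(a)=\lambda$ and $\cumuf_2(a)=\sigma^2$. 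As $\delta^2=\sigma^2(\sigma^2+2\lambda^2)$ is itself a function of $\lambda$ and $\sigma$, the value $T(\pi)$ depends only on $\lambda$ and $\sigma$, and hence so does the whole limiting moment sequence.

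Finally, to produce the limiting variable $\mathbf{S}$, I would invoke Hamburger's moment problem. For each $n$ the element $S_n$ is self-adjoint in the tracial space $(\A\otimes\A,\tau\otimes\tau)$, so $\big(\tau\otimes\tau(S_n^{j+k})\big)_{j,k}$ is a positive semidefinite Hankel matrix; passing to the limit, the moment sequence obtained above is again positive semidefinite Hankel. Hence there is a probability measure on $\R$ with these moments, realized as a self-adjoint variable $\mathbf{S}\in(\A',\tau')$ (for instance multiplication by $x$ on the associated $L^2$ space), giving $S_n\Rightarrow\mathbf{S}$. The main obstacle is the combinatorial bookkeeping of the middle steps: using the centering lemma to eliminate every partition except the pair partitions while checking that the surviving normalization neither vanishes nor diverges, and then exploiting the two–fold appearance of each index to force only the first two cumulants to enter.
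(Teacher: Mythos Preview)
Your proposal is correct and follows essentially the same approach as the paper: expand the moments, group by the kernel partition, use Lemma~\ref{lemma: centering} to kill singletons, keep only pair partitions in the limit, and then reduce each surviving term to $\cumuf_1(a)=\lambda$ and $\cumuf_2(a)=\sigma^2$ via the moment--cumulant formula. Your explicit invocation of the Hamburger moment problem (via positive semidefiniteness of the limiting Hankel matrix) to realize $\mathbf{S}$ is a step the paper leaves implicit.
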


\begin{proof}
  We begin by writing
    \begin{align*}
        \tau\otimes \tau(S_n^p)=\frac{1}{n^{p/2}}\sum_{i \in [n]^p}\tau\otimes \tau (b_{i_1}\cdots b_{i_p}).
    \end{align*}
    Since $b_{1},\ldots,b_n$ are identically distributed, the expression
    \begin{align}\label{equation: def of tau(pi)}
        \tau\otimes \tau (b_{i_1}\cdots b_{i_p})
    \end{align}
    depends only on the partition $\pi=\pi(i) \in P(p)$ given by $l \sim_\pi k $ (that is, $l,k$ belong to the same block of $\pi$) if and only if $i_l=i_k$. Denote the common value of \eqref{equation: def of tau(pi)} by $\tau\otimes \tau(\pi)$. Then we have
    \begin{align*}
        \tau\otimes \tau(S_n^p)=\frac{1}{n^{p/2}}\sum_{\pi \in P(p)}\tau\otimes \tau(\pi)|\{i \in [n]^p: \pi(i)=\pi\}|.
    \end{align*}
    To count the cardinality, we choose an index for each block. Therefore, we have
    \begin{align*}
        |\{i \in [n]^p: \pi(i)=\pi\}|=n(n-1)\cdots (n-|\pi|+1) \sim n^{|\pi|}.
    \end{align*}
    By Lemma \ref{lemma: centering}, if $\pi$ has a block of size $1$, $\tau\otimes \tau(\pi)=0$. Thus, we have
    \begin{align*}
        \tau\otimes \tau(S_n^p)=\sum_{\substack{\pi \in P(p)\\ |V| \ge 2; \forall V \in \pi}}\tau\otimes \tau(\pi)\frac{n(n-1)\cdots (n-|\pi|+1) }{n^{p/2}}.
    \end{align*}
    Since $|V| \ge 2$ for all blocks $V \in \pi$, we have $|\pi| \le p/2$. If there exists a block $V \in \pi$ such that $|V| \ge 3$, we immediately have $|\pi|<p/2$, and its contribution is negligible. In particular, this implies that the odd moments of $S_n$ are asymptotically vanishing. We deduce that
    \begin{align*}
        \lim_{n \to \infty}\tau\otimes \tau(S_n^p)=\lim_{n\to \infty}\sum_{\substack{\pi \in P(p)\\ |V| = 2; \forall V \in \pi}}\tau\otimes \tau(\pi)\frac{n(n-1)\cdots (n-|\pi|+1) }{n^{p/2}}.
    \end{align*}
    In this case, $\pi$ is a pair partition and $|\pi|=p/2$, hence we deduce the formula
    \begin{align*}
        \lim_{n \to \infty}\tau\otimes \tau(S_n^p)=\sum_{\pi \in P_2(p)}\tau\otimes \tau(\pi),
    \end{align*}
which shows that $S_n$ converges. 
To prove that the limit depends only on $\lambda$ and $\sigma$, we write
\begin{align*}
        \tau\otimes \tau(\pi)=\tau\otimes \tau(b_{i_1}\cdots b_{i_p})=\frac{1}{\delta^p}\sum_{I \subseteq [p]}(-1)^{|I|}\lambda^{2|I|}\tau^2\left(\prod_{l \in I^c}^{\to}a_{i_l}\right),
    \end{align*}
    where $\pi(i)=\pi$. By the moment-cumulant formula, we have
    \begin{align*}
        \tau\left(\prod_{l \in I^c}^{\to}a_{i_l}\right)=\sum_{\sigma \in NC(I^c)} \kappa_\sigma((a_{i_l})_{l \in I^c}).
    \end{align*}
    Since the mixed cumulants of free variables vanish, the only partitions $\sigma \in NC(I^c)$ that contribute are those such that every block $V \in \sigma$ has cardinality at most two. We then have
    \begin{align*}
        \kappa_\sigma((a_{i_l})_{l \in I^c})&=\prod_{\substack{V \in \sigma\\ |V|=2}}\kappa_{|V|}(a,a)\prod_{\substack{V \in \sigma\\ |V|=1}}\kappa_{|V|}(a) =\sigma^{2\#\{V \in \sigma:|V|=2\}}\lambda^{\#\{V \in \sigma:|V|=1\}}.
    \end{align*}
    This concludes the proof.
\end{proof}


\section{Proof of Theorem~\ref{th: main theorem}}\label{section: proof}

After proving the existence of the limit in the previous section, the goal here is to identify this limit as stated in Theorem~\ref{th: main theorem}. In all this section, $(a_n)_{n\in \N}$ denote free copies of a random variable $a$ with mean $\lambda$ and variance $\sigma^2$. Moreover, the common law of the normalized tensors will be denoted by 
\begin{align}\label{equation: common distribution of b}
    b=\frac{1}{\delta}\big(a\otimes a-\lambda^2\big),
\end{align}
where $\delta^2=\var(a\otimes a)=\sigma^2(\sigma^2+2\lambda^2)$. 

Throughout the proof, we will assume $p$ is an even integer. Following Proposition~\ref{proposition: existence of the limit}, we denote ${\bf{S}}$ the limit of $S_n$ and note that 
\begin{align*}
    \tau'({\bf{S}}^{p})=\sum_{\pi \in P_2(p)}\tau\otimes \tau(\pi),
\end{align*}
where $\tau\otimes \tau(\pi)=\tau\otimes \tau(b_{i_1}\cdots b_{i_p})$ with $\pi(i)=\pi$. 

\subsection{Contribution of noncrossing blocks}

We begin by removing interval blocks.
\begin{lemma}\label{lemma: interval blocks}
    Let $\pi \in P_2(p)$ and suppose that there exists $l \in [p]$ such that $\{l,l+1\} \in \pi$ (with the convention that $p+1:=1$). Then
    \begin{align*}
        \tau \otimes \tau (\pi)=\tau\otimes \tau(\pi\setminus \{l,l+1\}).
    \end{align*}
\end{lemma}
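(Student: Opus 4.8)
The plan is to reduce, by traciality, to the case where the interval block sits at positions $\{1,2\}$, and then to carry out an explicit expansion that exploits the fact that the two legs of the tensor decouple under freeness. By cyclicity of the tracial state $\tau\otimes\tau$ on $\A\otimes\A$, I would first rotate the word $b_{i_1}\cdots b_{i_p}$ so that the interval block $\{l,l+1\}$ (possibly the wraparound block $\{p,1\}$) becomes $\{1,2\}$. Writing $a:=a_{i_1}=a_{i_2}$, the defining property $\pi(i)=\pi$ together with $\{1,2\}\in\pi$ guarantees that the index $i_1$ is distinct from $i_3,\dots,i_p$; hence $a$ is free from the whole family $\{a_{i_3},\dots,a_{i_p}\}$. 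Setting $Y:=b_{i_3}\cdots b_{i_p}$, it then suffices to prove
\[
\tau\otimes\tau\big(b_{i_1}^2\,Y\big)=\tau\otimes\tau(Y),
\]
since the right-hand side is exactly $\tau\otimes\tau(\pi\setminus\{l,l+1\})$.

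Next I would expand
\[
b_{i_1}^2=\delta^{-2}\big(a^2\otimes a^2-2\lambda^2\,(a\otimes a)+\lambda^4\,\1\otimes\1\big),
\]
and decompose $Y=\sum u\otimes v$ by linearity, where each $u,v$ lies in the algebra generated by $a_{i_3},\dots,a_{i_p}$. Using the product rule $\tau\otimes\tau\big((x\otimes y)(u\otimes v)\big)=\tau(xu)\tau(yv)$ together with the elementary fact that $\tau(xu)=\tau(x)\tau(u)$ whenever $x$ is free from $u$, each of the three terms factorizes leg-wise: the leading term contributes $\tau(a^2)^2\,\tau(u)\tau(v)$, while the cross and constant terms contribute $(-2\lambda^4+\lambda^4)\,\tau(u)\tau(v)$. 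Summing these and inserting $\tau(a^2)=\sigma^2+\lambda^2$ gives the coefficient
\[
\delta^{-2}\big(\tau(a^2)^2-\lambda^4\big)=\delta^{-2}\big((\sigma^2+\lambda^2)^2-\lambda^4\big)=\delta^{-2}\,\delta^2=1,
\]
so that $\tau\otimes\tau\big(b_{i_1}^2(u\otimes v)\big)=\tau(u)\tau(v)=\tau\otimes\tau(u\otimes v)$. Summing over the decomposition of $Y$ then yields the claim.

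The genuine subtlety here—and the reason one cannot simply invoke an abstract freeness factorization $\tau\otimes\tau(b_{i_1}^2\,Y)=\tau\otimes\tau(b_{i_1}^2)\,\tau\otimes\tau(Y)$—is that the tensors $a\otimes a$ and $a_{i_k}\otimes a_{i_k}$ are \emph{not} free in $(\A\otimes\A,\tau\otimes\tau)$; indeed, this failure is precisely the content of Corollary~\ref{cor: non freeness of tensor products}. Freeness is available only within each individual leg of the tensor, so the factorization must be established through the explicit leg-wise expansion above rather than abstractly. I expect the main point to be the bookkeeping verification that the three contributions collapse to exactly the normalization constant, which works only because $\delta^2$ was defined to equal $\var(a\otimes a)=(\sigma^2+\lambda^2)^2-\lambda^4$.
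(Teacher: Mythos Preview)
Your argument is correct and follows essentially the same route as the paper: rotate by traciality so the interval block sits at the end (the paper places it at $\{p-1,p\}$ rather than $\{1,2\}$, which is immaterial), expand $b^2$ into the three terms $a^2\otimes a^2$, $-2\lambda^2\,a\otimes a$, $\lambda^4\,\1\otimes\1$, and then use leg-wise freeness to factor out the scalar $\delta^{-2}(\tau(a^2)^2-\lambda^4)=1$. The only cosmetic difference is that the paper packages the leg-wise factorization step into its centering Lemma~\ref{lemma: centering}, whereas you unfold it directly via the simple-tensor decomposition $Y=\sum u\otimes v$.
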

\begin{proof}
    By cyclicity, we can assume $l=p-1$. We then have
    \begin{align*}
       \delta^2 \tau\otimes \tau(\pi)&=\tau\otimes \tau\left(b_{i_1}\cdots b_{i_{p-2}}\cdot \left(a_{i_p}^2\otimes a_{i_p}^2+\lambda^41\otimes 1-2\lambda^2a_{i_p}\otimes a_{i_p}\right)\right)\\
        &=\tau\otimes \tau\left(b_{i_1}\cdots b_{i_{p-2}}\cdot a_{i_p}^2\otimes a_{i_p}^2\right)+\lambda^4\tau\otimes \tau\left(b_{i_1}\cdots b_{i_{p-2}}\right)\\
        &\quad -2\lambda^2\tau\otimes \tau\left(b_{i_1}\cdots b_{i_{p-2}}a_{i_p}\otimes a_{i_p}\right)\\
        &=:I+II+III.
    \end{align*}
    We immediately recognize
    \begin{align*}
        II=\lambda^4\tau\otimes \tau(\pi\setminus \{p-1,p\}).
    \end{align*}
    By Lemma \ref{lemma: centering}, we have
    \begin{align*}
        &I=\tau^2(a^2)\tau\otimes \tau(\pi\setminus \{p-1,p\});\\
        &III=-2\lambda^4\tau\otimes \tau(\pi\setminus \{p-1,p\}).
    \end{align*}
    Hence
    \begin{align*}
        \delta^2\tau\otimes \tau(\pi)=\left(\tau^2(a^2)-\lambda^4\right)\tau\otimes \tau(\pi\setminus \{p-1,p\}).
    \end{align*}
    To conclude, we note that $\tau^2(a^2)-\lambda^4=\delta^2$ and finish the proof. 
\end{proof}

Recall that a noncrossing pair partition $\pi \in NC_2(p)$ always has an interval block $V=\{l,l+1\} \in \pi$ such that $\pi\setminus V$ is a noncrossing pair partition. In particular, by induction, Lemma \ref{lemma: interval blocks} implies the following.
\begin{corollary}\label{corollary: contribution of noncrossing partitions}
    For any $\pi \in NC_2(p)$, we have $\tau\otimes \tau(\pi)=1$.
\end{corollary}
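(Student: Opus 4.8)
The plan is to prove the statement by induction on the even integer $p$, peeling off one interval block at a time using Lemma~\ref{lemma: interval blocks}. The base case is $p=0$, where the only partition is the empty one and $\tau\otimes\tau$ of the empty product equals $\tau\otimes\tau(\1\otimes\1)=1$.

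For the inductive step, I would invoke the combinatorial fact recalled immediately before the corollary: any nonempty $\pi \in NC_2(p)$ admits an interval block $V=\{l,l+1\}$ (with $l \in [p-1]$) such that $\pi\setminus V$, after the obvious relabeling of the remaining $p-2$ points, is again a noncrossing pair partition in $NC_2(p-2)$. Applying Lemma~\ref{lemma: interval blocks} to this interval block gives $\tau\otimes\tau(\pi)=\tau\otimes\tau(\pi\setminus V)$, while the induction hypothesis applied to $\pi\setminus V \in NC_2(p-2)$ yields $\tau\otimes\tau(\pi\setminus V)=1$. Chaining the two equalities gives $\tau\otimes\tau(\pi)=1$, which closes the induction.

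The only point requiring verification is that the deletion of an interval block preserves both the noncrossing property and the pairing structure, so that the induction hypothesis is applicable to $\pi\setminus V$; this is exactly the standard fact recalled before the corollary, so it may be cited rather than reproved. I do not expect any substantive obstacle: all the analytic content is carried by Lemma~\ref{lemma: interval blocks}, and the corollary follows by a routine iterated peeling-off argument.
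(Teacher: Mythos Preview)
Your proposal is correct and is essentially identical to the paper's own argument: the paper also deduces the corollary by induction from Lemma~\ref{lemma: interval blocks}, using the standard fact that every nonempty noncrossing pair partition contains an interval block whose removal leaves a smaller noncrossing pair partition.
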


\subsection{Decomposition of pair partitions}\label{subsection: graph of crossings}
In order to capture the contribution of crossing partitions, we need to decompose a partition $\pi \in P_2(p)\setminus NC_2(p)$ using smaller partitions. We denote $\pi=\pi_1\oplus \cdots \oplus \pi_k$ if $[p]$ can be decomposed into $k$ intervals $I_1,\ldots,I_k$ such that $\pi_k \in P_2(I_k)$ and $V \in \pi$ if $V \in \pi_l$ for some $1 \le l \le k$. 
For $I \subseteq [p]$, let
\begin{align}\label{equation: notation a_I}
    a_I:=\prod_{l \in I}^{\to}a_{i_l},
\end{align}
and $a_{\varnothing}:=1$.

\begin{lemma}\label{lemma: recursion tau(pi)}
Let $\pi \in P_2(p)$. Then, the following hold.
\begin{enumerate}[label=(\ref{lemma: recursion tau(pi)}.\roman*)]
    \item\label{recursion: direct sum} If $\pi=\pi_1 \oplus \cdots \oplus \pi_l$, then
    \begin{align*}
        \tau\otimes \tau(\pi)=\tau\otimes \tau(\pi_1)\cdots \tau\otimes \tau(\pi_l).
    \end{align*}
    \item\label{recursion: superblocks} If $\pi=\{1,p\}\cup \pi_1$, where $\pi_1 \in P_2(\{2,\ldots,p-1\})$, then 
    \begin{align*}
        \tau\otimes \tau(\pi)=\tau\otimes \tau(\pi_1).
    \end{align*}
    Moreover, if there exists an interval $I \subseteq [p]$ such that $\pi|_I$ is a pair partition, then
    \begin{align*}
        \tau\otimes \tau(\pi)=\tau\otimes \tau(\pi|_I)\tau\otimes \tau (\pi|_{I^c}).
    \end{align*}
    \item\label{recursion: superblock with direct sum} If $\pi$ has a block $V=\{r,s\}$ such that for any block $U=\{l,k\} \in \pi$ with $r <l<s$, we have $r<k<s$ (i.e., every point inside $V$ matches another one inside $V$), we have
    \begin{align*}
        \tau\otimes \tau(\pi)=\tau\otimes \tau(\pi|_{V_-})\tau\otimes \tau(\pi|_{V_+}),
    \end{align*}
    where $\pi|_{V_-}$ is the restriction of $\pi$ to inside of $V$ and $\pi|_{V_+}$ is the restriction of $\pi$ to outside of $V$.
\end{enumerate}
\end{lemma}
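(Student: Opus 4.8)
The plan is to prove each of the three items by reducing the trace of a pair partition to traces of sub-partitions, using the centering Lemma~\ref{lemma: centering} as the main engine and freeness across tensors as the structural tool.

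For item~\ref{recursion: direct sum}, the key observation is that if $\pi=\pi_1\oplus\cdots\oplus\pi_l$, then the variables appearing in the intervals $I_1,\ldots,I_l$ use disjoint index sets, so the families $\{a_{i_j}:j\in I_1\},\ldots,\{a_{i_j}:j\in I_l\}$ are mutually free. First I would handle the case $l=2$ and then iterate. Writing $b_{i_j}=\frac{1}{\delta}(a_{i_j}\otimes a_{i_j}-\lambda^2)$ and expanding, the product $b_{I_1}\cdots$ factors as a product of two blocks coming from disjoint free families. The cleanest route is to argue that $\tau\otimes\tau$ of a product of two operators, each built from one of two mutually free families, factorizes: this is a direct consequence of freeness combined with the fact that both factors are already centered (each $\tau\otimes\tau(b_{i_j})=0$, and more generally each interval product has been centered via Lemma~\ref{lemma: centering} when it is itself a pair partition). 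Concretely, if $X$ is a word in the first free family and $Y$ a word in the second, freeness gives $\tau\otimes\tau(XY)=\tau\otimes\tau(X)\tau\otimes\tau(Y)$ whenever the two families are free, which yields $\tau\otimes\tau(\pi)=\tau\otimes\tau(\pi_1)\tau\otimes\tau(\pi_2)$, and induction finishes the general case.

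For item~\ref{recursion: superblocks}, the block $\{1,p\}$ is an outer interval pair: by cyclicity I can rotate so that these two matched points sit at positions $1$ and $p$, bracketing everything else. The point is that the index $i_1=i_p$ appears only at these two positions (since $\pi$ is a pair partition), so within the long word $b_{i_1}b_{i_2}\cdots b_{i_{p-1}}b_{i_p}$ the variable $a_{i_1}$ is free from the entire middle block $\pi_1\in P_2(\{2,\ldots,p-1\})$. I would open up $b_{i_1}$ and $b_{i_p}$ and apply the special case of Lemma~\ref{lemma: centering} (the ``In particular'' clause) twice, once for each occurrence, extracting the factors $\tau(a_{i_1})\tau(a_{i_1})=\lambda^2$ and collecting the cross terms; the normalization by $\delta^2$ and the identity $\tau^2(a^2)-\lambda^4=\delta^2$ (already used in Lemma~\ref{lemma: interval blocks}) will make the prefactor collapse to $1$. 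The second assertion of~\ref{recursion: superblocks}, factorization across an interval $I$ on which $\pi$ restricts to a pair partition, follows from~\ref{recursion: direct sum} applied to the decomposition $\pi=\pi|_I\oplus\pi|_{I^c}$, since such an interval forces a direct-sum splitting.

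Item~\ref{recursion: superblock with direct sum} is the one I expect to be the main obstacle, because the block $V=\{r,s\}$ need not be an interval: there may be matched pairs nested strictly inside it. The hypothesis guarantees that no block straddles $V$, so $V$ together with everything nested inside it forms a sub-word that is ``closed off'' from the outside. The strategy is to isolate the factor $a_{V_-}:=a_{i_r}\big(\prod_{r<j<s}a_{i_j}\big)a_{i_s}$ (the word from position $r$ through $s$ on each leg), whose index $i_r=i_s$ is free from everything outside $[r,s]$. I would first reduce the inside to its restricted pair partition, then use that the single repeated index $i_r=i_s$ lets me apply Lemma~\ref{lemma: contribution of ac1ac2} with $a=a_{i_r}$: the variable $a_{i_r}$ appears exactly at the two ends of the inner word and is free from the inner factors $c_1,c_2$, so that $\tau(a_{i_r}c_1 a_{i_r}c_2)=\var(a)\tau(c_1)\tau(c_2)+\tau^2(a)\tau(c_1c_2)$. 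Carefully tracking both tensor legs and combining the expansion of the $b$'s with the $\delta^2$ normalization should produce exactly $\tau\otimes\tau(\pi|_{V_-})\tau\otimes\tau(\pi|_{V_+})$; the bookkeeping of which terms survive after centering, and matching the constants back to $\delta^2$, is where the real work lies.
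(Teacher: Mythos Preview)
Your approach to \ref{recursion: direct sum} is correct and matches the paper's in spirit: once you expand each $b_{i_j}$ the product over $I_1$ and the product over $I_2$ are sums of simple tensors $x\otimes x$ with $x$ in mutually free subalgebras, so $\tau(x_1x_2)=\tau(x_1)\tau(x_2)$ on each leg and the whole thing factorizes. The paper does exactly this expansion explicitly.

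For \ref{recursion: superblocks} you are working much harder than necessary, and there is a small slip. Opening both $b_{i_1}$ and $b_{i_p}$ and trying to apply the ``In particular'' clause of Lemma~\ref{lemma: centering} to each occurrence separately does not work as stated, because that clause requires the index to be \emph{unique}, whereas $i_1=i_p$ appears twice. What you would actually need is traciality to collapse $a_{i_1}\otimes a_{i_1}\cdot M\cdot a_{i_1}\otimes a_{i_1}$ into $(a_{i_1}^2\otimes a_{i_1}^2)M$ and then invoke freeness; at that point you are re-deriving Lemma~\ref{lemma: interval blocks}. The paper sidesteps all of this: use traciality of $\tau\otimes\tau$ to write $\tau\otimes\tau(b_{i_1}\cdots b_{i_p})=\tau\otimes\tau(b_{i_2}\cdots b_{i_{p-1}}b_{i_p}b_{i_1})$, and now $\{p,1\}$ is an interval block, so Lemma~\ref{lemma: interval blocks} removes it at unit cost. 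For the ``Moreover'' part, your reduction to \ref{recursion: direct sum} is right but you are missing one step: $I^c$ is not an interval unless $I$ is initial or final, so you must first rotate cyclically to place $I$ at the beginning.

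Your plan for \ref{recursion: superblock with direct sum} is the real misjudgment. You call it ``the main obstacle'' and gear up for a direct attack via Lemma~\ref{lemma: contribution of ac1ac2} with detailed bookkeeping on both tensor legs. In fact it is a two-line corollary of the parts you have already proved: cyclically rotate so that $r=1$, i.e.\ $V=\{1,k\}$; then the hypothesis that nothing straddles $V$ means $\pi=(V\cup\pi|_{V_-})\oplus\pi|_{V_+}$ is a genuine direct sum into the intervals $\{1,\ldots,k\}$ and $\{k+1,\ldots,p\}$. Apply \ref{recursion: direct sum} to split, then \ref{recursion: superblocks} to peel off $V$ from the first factor. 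No expansion, no constants to match.
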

\begin{proof}
    \ref{recursion: direct sum} By induction, it suffices to prove the case $\pi=\pi_1\oplus \pi_2$. Let $I_1,I_2$ be the disjoint decomposition of $[p]$ given by $\pi_1$ and $\pi_2$. Then, we can write
    \begin{align*}
        \tau\otimes \tau(\pi)=\frac{1}{\delta^p}\sum_{\substack{J_1 \subseteq I_1\\ J_2 \subseteq I_2}}(-1)^{|J_1|+|J_2|}\lambda^{2(|J_1|+|J_2|)}\tau^2\left(\prod_{l \in J_1^c}a_{i_l}\prod_{l \in J_2^c}a_{i_l}\right).
    \end{align*}
    Since $\pi$ is the direct sum of $\pi_1,\pi_2$, the variables $a_{J_1^c}, a_{J_2^c}$ are free and we can write
    \begin{align*}
        \tau\otimes \tau(\pi)=\frac{1}{\delta^p}\sum_{\substack{J_1 \subseteq I_1\\ J_2 \subseteq I_2}}(-1)^{|J_1|+|J_2|}\lambda^{2(|J_1|+|J_2|)}\tau^2\left(\prod_{l \in J_1^c}a_{i_l}\right)\tau^2\left(\prod_{l \in J_2^c}a_{i_l}\right).
    \end{align*}
    It is immediate to check that the right-hand-side is equal to $\tau\otimes \tau(\pi_1)\tau\otimes \tau(\pi_2)$.

    \ref{recursion: superblocks} By cyclicity, we have
    \begin{align*}
        \tau\otimes \tau(\pi)&=\tau\otimes \tau(b_{i_2}\cdots b_{i_{p-1}}b_{i_{p}}b_{i_1}).
    \end{align*}
    Since $\{1,p\} \in \pi$, Lemma \ref{lemma: interval blocks} implies that
    \begin{align*}
        \tau\otimes \tau(\pi)=\tau\otimes \tau(b_{i_2}\cdots b_{i_{p-1}})=\tau\otimes \tau(\pi_1).
    \end{align*}
    The second part follows again by cyclicity as we can assume $I=\{1,\ldots,k\}$ for some $k \in [p]$, and the variables are free.

    \ref{recursion: superblock with direct sum} By cyclicity, we can assume that $V=\{1,k\}$ for some $k\in [p]$. Note that $V$ creates a direct sum $\pi=(V\cup \pi|_{V_-})\oplus \pi|_{V_+}$. The result follows by \ref{recursion: direct sum} and \ref{recursion: superblocks}.
\end{proof}

Note that any block $V \in \pi$ that does not cross any other block of $\pi$ is either an interval block or a block that satisfies \ref{recursion: superblock with direct sum}. In particular, its removal does not affect the value of $\tau\otimes \tau(\pi)$. It is clear then that $\tau\otimes \tau(\pi)$ is a multiplicative function \cite{BS1996} over the connected components of $\pi$. Using the mapping $\Phi$ defined in \eqref{equation: mapping pi connected components}, we can write
\begin{align}\label{equation: decomposition of tau in connected parts}
    \tau\otimes \tau(\pi)=\prod_{T \in \hat{\pi}}\tau\otimes \tau (\pi_T),
\end{align}
where $(\hat{\pi},(\pi_T)_{T \in \hat{\pi}})=\Phi(\pi)$. 
In view of this, we will now focus on the case where $\pi \in P_2^{\operatorname{con}}(p)$, for $p \ge 4$, as the case $p=2$ corresponds to noncrossing blocks.

\subsection{Contribution of connected partitions}\label{subsection: contribution of connected partitions}

Given $\pi\in P_2^{\operatorname{con}}(p)$, for an even integer $p \ge 4$, we recall that $\pi \in P_2^{\operatorname{bicon}}(p)$ if its intersection graph $G(\pi)$ is a bipartite connected graph.

The following is the main proposition of this subsection.
\begin{proposition}\label{proposition: contribution bipartite connected part}
    Let $p \ge 4$ be an even integer and $\pi \in P_2^{\operatorname{con}}(p)$. Then the following hold.
    \begin{enumerate}
        \item If $\pi \notin P_2^{\operatorname{bicon}}(p)$, then $\tau\otimes \tau(\pi)=0$ ;
        \item If $\pi \in P_2^{\operatorname{bicon}}(p)$, then
        \begin{align*}
        \tau\otimes \tau(\pi)=2 \Big(\frac{q}{2}\Big)^{\frac{p}{2}},
        \end{align*}
        where $q=\frac{2\lambda^2}{\sigma^2+2\lambda^2}$.
    \end{enumerate}    
\end{proposition}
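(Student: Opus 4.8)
The plan is to start from the expansion already obtained in the proof of Proposition~\ref{proposition: existence of the limit}. Reopening each factor $b_{i_l}=\frac{1}{\delta}(a_{i_l}\otimes a_{i_l}-\lambda^2)$ and using that the two legs of the tensor factorize through $\tau$, one has
\begin{align*}
    \delta^p\,\tau\otimes\tau(\pi)=\sum_{I\subseteq[p]}(-\lambda^2)^{|I|}\,\tau^2\Big(\prod_{l\in I^c}^{\to}a_{i_l}\Big).
\end{align*}
The next step is to evaluate $\tau\big(\prod_{l\in I^c}^{\to}a_{i_l}\big)$ by the moment--cumulant formula. Since the $a_k$ are free, mixed cumulants vanish, so the only surviving noncrossing partitions $\sigma$ of $I^c$ have blocks of size at most two; a size-two block is forced to be a $\pi$-pair with both endpoints in $I^c$ and carries weight $\sigma^2$ (the second free cumulant), while a singleton carries weight $\lambda$ (the first). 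This is precisely the content of the repeated use of Lemma~\ref{lemma: contribution of ac1ac2}, which resolves a doubled variable into a separating term $\var(a)=\sigma^2$ and a joining term $\tau^2(a)=\lambda^2$. The remaining constraint is that the chosen pairs be pairwise noncrossing.

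The heart of the argument is a per-block reorganization of the resulting triple sum over $(I,\sigma_L,\sigma_R)$, where $\sigma_L,\sigma_R$ are the noncrossing partitions arising from the two copies of $\tau$. For each block $V=\{r,s\}$ of $\pi$ I distinguish three modes according to $I$: both endpoints in $I$ (weight $\lambda^4$); exactly one endpoint in $I$, whose partner is then a forced singleton in both legs (weight $-\lambda^4$, in two sub-cases according to which endpoint lies in $I$); or both endpoints in $I^c$. Writing $B$ for the set of blocks and $\Gamma\subseteq B$ for the both-out blocks, every block outside $\Gamma$ contributes the factor $\lambda^4-2\lambda^4=-\lambda^4$ independently of everything else, since the forced singletons play no role in the noncrossing condition. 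For the both-out blocks the two legs decouple, and each leg contributes, with $S$ the set of paired blocks,
\begin{align*}
    w(\Gamma):=\sum_{\substack{S\subseteq\Gamma\\ S\ \text{independent in}\ G(\pi)}}(\sigma^2)^{|S|}(\lambda^2)^{|\Gamma|-|S|},
\end{align*}
where independence in $G(\pi)$ encodes exactly that the paired blocks are pairwise noncrossing. This gives $\delta^p\,\tau\otimes\tau(\pi)=\sum_{\Gamma\subseteq B}(-\lambda^4)^{|B|-|\Gamma|}\,w(\Gamma)^2$.

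The final step expands $w(\Gamma)^2$ over pairs $S_1,S_2\subseteq\Gamma$ of independent sets and exchanges the order of summation. The powers of $\lambda$ recombine so that the dependence on $\Gamma$ sits entirely in the sign, and the elementary identity $\sum_{U\subseteq\Gamma\subseteq B}(-1)^{|B|-|\Gamma|}=\mathbf{1}_{U=B}$ collapses the $\Gamma$-sum, leaving
\begin{align*}
    \delta^p\,\tau\otimes\tau(\pi)=\sum_{\substack{S_1,S_2\ \text{independent in}\ G(\pi)\\ S_1\cup S_2=B}}(\sigma^2)^{|S_1|+|S_2|}\,\lambda^{\,2p-2(|S_1|+|S_2|)}.
\end{align*}
Now I read off the answer graph-theoretically: covering the vertex set of the connected graph $G(\pi)$ by two independent sets $S_1,S_2$ is the same as a proper $2$-colouring, because connectedness forbids isolated vertices and hence forces $S_1\cap S_2=\varnothing$; such a colouring exists iff $G(\pi)$ is bipartite, in which case there are exactly two of them, each with $|S_1|+|S_2|=|B|=p/2$. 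Hence the sum is $0$ when $\pi\notin P_2^{\operatorname{bicon}}(p)$ and equals $2\sigma^p\lambda^p$ when $\pi\in P_2^{\operatorname{bicon}}(p)$; dividing by $\delta^p=\sigma^p(\sigma^2+2\lambda^2)^{p/2}$ yields $2(q/2)^{p/2}$ since $q/2=\lambda^2/(\sigma^2+2\lambda^2)$.

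I expect the main obstacle to be the bookkeeping in the reorganization step: one must check that the two legs genuinely decouple once $\Gamma$ is fixed, that the forced singletons coming from the mode with one endpoint in $I$ never interfere with the noncrossing condition, and that the noncrossing condition on the paired blocks is exactly independence in $G(\pi)$. Once this reduction to the independence-polynomial expression $w(\Gamma)$ is in place, the inclusion--exclusion over $\Gamma$ and the identification of covers by two independent sets with proper $2$-colourings are clean, and they carry the entire bipartite/non-bipartite dichotomy.
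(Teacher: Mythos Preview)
Your proof is correct and takes a genuinely different, and in several respects cleaner, route than the paper's.

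The paper proceeds by an operational block-removal recursion: it removes the blocks of $\pi$ one at a time, at each step recording via a colour $w_V\in\{0,1\}$ which tensor leg of the crossing blocks has been replaced by a free copy, and via a four-state indicator $\theta$ which case one is in. Four separate lemmas handle the four values of $\theta$, and the bipartite dichotomy emerges from showing that along any ordering of the blocks the colours are forced to alternate; an odd cycle in $G(\pi)$ eventually forces $\theta_4=1$, which yields zero, while in the bipartite case each removal contributes a factor $q/2$ (the first contributing $q$). This is perfectly rigorous but requires a fair amount of bookkeeping and an induction over the block ordering.

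Your approach expands everything at once and reduces the entire computation to the identity
\[
\delta^p\,\tau\otimes\tau(\pi)=\sum_{\substack{S_1,S_2\ \text{independent in}\ G(\pi)\\ S_1\cup S_2=B}}(\sigma^2)^{|S_1|+|S_2|}\lambda^{2p-2(|S_1|+|S_2|)},
\]
obtained via the global moment--cumulant formula and an inclusion--exclusion over the set $\Gamma$ of both-out blocks. The three checks you flag as potential obstacles are all sound: singletons never obstruct noncrossingness, so the non-$\Gamma$ blocks genuinely contribute $-\lambda^4$ independently; the noncrossing constraint on the paired blocks is precisely independence in $G(\pi)$, since relative order in $I^c$ agrees with relative order in $[p]$; and the two legs decouple because they share the same $I$ but choose $\sigma_L,\sigma_R$ independently. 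Your endgame is also correct: connectedness of $\pi$ is equivalent to connectedness of $G(\pi)$ (the paper records $|\hat\pi|=\ccblocks(\pi)$, so $\pi\in P_2^{\operatorname{con}}(p)$ forces $\ccblocks(\pi)=1$), and with $p\ge 4$ there are no isolated vertices, so $S_1\cap S_2=\varnothing$ and pairs $(S_1,S_2)$ covering $B$ are exactly proper $2$-colourings.

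What each approach buys: the paper's recursion is more algorithmic and might adapt more readily to variants (higher-order tensors, other notions of independence) where one does want to peel off one block at a time; your argument is shorter, needs no auxiliary constructions, and makes the role of bipartiteness completely transparent as the existence of a $2$-colouring.
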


To prove Proposition \ref{proposition: contribution bipartite connected part}, our goal is to remove the blocks $V \in \pi$ one at a time and track its influence on the rest of the partition $\pi\setminus V$. To this end, we will define two main quantities associated with $V$. First, we will define a coloring $w_V \in \{0,1\}$ of $V$ that encodes how the removal of $V$ affects the other blocks. Secondly, we will define a binary vector $\theta_V \in \{0,1\}^4$ satisfying
\begin{align*}
    \sum_{j\in[4]} \theta_j=1,
\end{align*}
that is, only one coordinate of $\theta_V$ is equal to $1$. It encodes how we remove the block $V$ from $\pi$, that is, its weight to $\tau\otimes\tau(\pi)$. Let us begin this description now.

Let $\pi \in P_2^{\operatorname{con}}(p)$ and $i \in [p]^p$ such that $\pi(i)=\pi$. Given an integer $t \ge 0$,  blocks $V_1,\ldots,V_t,V_{t+1} \in \pi$, and a coloring $w \in \{0,1\}^{t+1}$, we denote $V_j^{(w_j)}$ the block $V_j$ under the color $w_j$. We then define the iterated joint law of $b_{i_j}$
\begin{align*}
    B_k:=B_k\left(V_1^{(w_1)},\ldots, V_k^{(w_k)}\right)=\left(b_{i_j,k}\right)_{j \in [p]}
\end{align*}
for $k \le t+1$ as follows. Let $(\tilde{a}_j)_{j \in \N}$ be a family of free independent copies of $a$, free from $(a_j)_{j \in \N}$. For $k=0$, we define $B_0=(b_{i_j})_{j \in [p]}$ and for any $1 \le k \le t+1$, we have
\begin{align*}
    B_k=\frac{1}{\delta}\left(a_{i_j}^{(l)}\otimes a_{i_j}^{(r)}-\lambda^2\right)_{j \in [p]},
\end{align*}
where
\begin{align}\label{equation: a_j is either a or tilde a}
    a_{i_j}^{(l)},a_{i_j}^{(r)} \in \{a_{i_j},\tilde{a}_{i_j}\},
\end{align}
for any $j \in [p]$. We then recursively define the choices in \eqref{equation: a_j is either a or tilde a} as follows. Given 
\begin{align*}
    B_k=\frac{1}{\delta}\left(a_{i_j}^{(l)}\otimes a_{i_j}^{(r)}-\lambda^2\right)_{j \in [p]},
\end{align*}
we consider the block $V_{k+1}=\{s_1<s_2\}$ and its color $w_{k+1}$, for $1 \le k \le t$. If $w_{k+1}=0$, for all blocks $V=\{t_1,t_2\} \in \pi$ that cross $V_{k+1}$, we replace the left legs of $(b_{i_{t_1},k},b_{i_{t_2},k})$ by a pair of free variables. Concretely, we define
    \begin{align*}
        &b_{i_{t_1},k+1}=\frac{1}{\delta}\left(\tilde{a}_{i_{t_1}}\otimes a_{i_{t_1}}^{(r)}-\lambda^2\right);\\
        &b_{i_{t_2},k+1}=\frac{1}{\delta}\left(a_{i_{t_1}}\otimes a_{i_{t_2}}^{(r)}-\lambda^2\right).
    \end{align*}
Otherwise, if $w_{k+1}=1$, we replace the right legs of $(b_{i_{t_1},k},b_{i_{t_2},k})$ by a pair of free variables instead,
\begin{align*}
    &b_{i_{t_1},k+1}=\frac{1}{\delta}\left(a_{i_{t_1}}^{(l)}\otimes \tilde{a}_{i_{t_1}}-\lambda^2\right);\\
    &b_{i_{t_2},k+1}=\frac{1}{\delta}\left(a_{i_{t_2}}^{(l)}\otimes a_{i_{t_1}}-\lambda^2\right).
\end{align*}
All the other blocks remain unchanged. Recall that for a vector $B$ and a set $I \subseteq [p]$, we set as in \eqref{equation: notation a_I}
\begin{align*}
    B_{I}=\prod_{l \in I}^{\to} B_{i_l},
\end{align*}
and $B_{\varnothing}=1$.
We then define
\begin{align*}
    &\tau\otimes\tau\left(\pi,V_1^{(w_1)},\ldots,V_k^{(w_k)}\right):=\tau\otimes \tau\left(\left(B_{k}\right)_{[p]}\right)\\
    &\tau\otimes\tau \left(\pi\setminus \{U_1,\ldots,U_{m}\},V_1^{(w_1)},\ldots,V_{k}^{(w_{k})}\right):=\tau\otimes \tau\left(\left(B_k\right)_{[p]\setminus \left(U_1\cup\cdots\cup U_{m}\right)}\right),
\end{align*}
where $U_1,\ldots,U_{m},V_1,\ldots,V_{k} \in \pi$ and
\begin{align*}
    B_{k}=B_k\left(V_1^{(w_1)},\dots,V_{k}^{(w_k)}\right).
\end{align*}

Note first that at each step $k$, the distribution of $(b_{i_{v_1},k},b_{i_{v_2},k})$ for a block $V=\{v_1<v_2\} \in \pi$ can be written as one of the following four cases.
\begin{enumerate}
    \item\label{case no crossing} The blocks $V_1,\ldots,V_k$ do not cross $V$, and we have
    \begin{align*}
        &b_{i_{v_1},k}=\frac{1}{\delta}\left(a_{i_{v_1}}\otimes a_{i_{v_1}}-\lambda^2\right);\\
        &b_{i_{v_2},k}=\frac{1}{\delta}\left(a_{i_{v_1}}\otimes a_{i_{v_1}}-\lambda^2\right),
    \end{align*}
    where we recall that $i_{v_1}=i_{v_2}$.
    \item\label{case cross w=0} All blocks $V_j$ that cross $V$ have the same color $w_j=0$, and we have
    \begin{align*}
        &b_{i_{v_1},k}=\frac{1}{\delta}\left(\tilde{a}_{i_{v_1}}\otimes a_{i_{v_1}}-\lambda^2\right);\\
        &b_{i_{v_2},k}=\frac{1}{\delta}\left(a_{i_{v_1}}\otimes a_{i_{v_1}}-\lambda^2\right).
    \end{align*}
    \item\label{case cross w=1} All blocks $V_j$ that cross $V$ have the same color $w_j=1$, and we have
    \begin{align*}
        &b_{i_{v_1},k}=\frac{1}{\delta}\left(a_{i_{v_1}}\otimes \tilde{a}_{i_{v_1}}-\lambda^2\right);\\
        &b_{i_{v_2},k}=\frac{1}{\delta}\left(a_{i_{v_1}}\otimes a_{i_{v_1}}-\lambda^2\right).
    \end{align*}
    \item\label{case cross both w} Otherwise, there exist two blocks $V_{j_1},V_{j_2}$ that cross $V$ such that $w_{j_1}\ne w_{j_2}$, that is, they have different colors, and then
    \begin{align*}
        &b_{i_{v_1},k}=\frac{1}{\delta}\left(\tilde{a}_{i_{v_1}}\otimes \tilde{a}_{i_{v_1}}-\lambda^2\right);\\
        &b_{i_{v_2},k}=\frac{1}{\delta}\left(a_{i_{v_1}}\otimes a_{i_{v_1}}-\lambda^2\right).
    \end{align*}
\end{enumerate}
We remark that even if the choices of replacements do not always happen for $b_{i_{v_1}}$, the joint distribution of $(b_{i_{v_1},k},b_{i_{v_2},k})$ can always be written as one of those four cases since the variables are free i.i.d. For instance, we have
\begin{align*}
    (a\otimes a,a\otimes \tilde{a})\overset{d}{=}(a\otimes \tilde{a},a\otimes a)\overset{d}{=}(\tilde{a}\otimes a,\tilde{a}\otimes \tilde{a}),
\end{align*}
and so on. Let
\begin{align*}
    &\theta \in \{0,1\}^4,\\
    &\sum_{j \in [4]}\theta_j=1,
\end{align*}
and denote 
\begin{align*}
    &b_{i_{v_1}}(\theta)=\frac{1}{\delta}\left(\theta_1 a_{i_{v_1}}\otimes a_{i_{v_1}}+\theta_2 \tilde{a}_{i_{v_1}}\otimes a_{i_{v_1}}+\theta_3 a_{i_{v_1}}\otimes \tilde{a}_{i_{v_1}}+\theta_4 \tilde{a}_{i_{v_1}}\otimes \tilde{a}_{i_{v_1}}-\lambda^2\right);\\
    &b_{i_{v_2}}(\theta)=\frac{1}{\delta}\left(a_{i_{v_1}}\otimes a_{i_{v_1}}-\lambda^2\right).
\end{align*}
The choice of $\theta:=\theta\left(V, V_1^{(w_1)}\ldots, V_k^{(w_k)}\right)$ corresponds to the case in which we are, that is, we define $\theta$ as the binary vector such that
\begin{align*}
    \left(b_{i_{v_1}}(\theta),b_{i_{v_2}}(\theta)\right)\overset{d}{=}\left(b_{i_{v_1},k},b_{i_{v_2},k}\right).
\end{align*}
Let $V=\{v_1<v_2\} \in \pi\setminus\{V_1,\ldots, V_k\}$ and
\begin{align*}
    &I_1:=\{1,\ldots,v_1-1\}\setminus \left(\bigcup_{j \in [k]}V_j\right);\\
    &I_2:=\{v_1+1,\ldots,v_2-1\}\setminus \left(\bigcup_{j \in [k]}V_j\right);\\
    &I_3:=\{v_2+1,\ldots,p\}\setminus \left(\bigcup_{j \in [k]}V_j\right).
\end{align*}
Our first goal is to obtain the contribution and coloring of $V$ to
\begin{align*}
    \tau\otimes\tau \left(\pi\setminus \{V_1,\ldots,V_{k}\},V_1^{(w_1)},\ldots,V_{k}^{(w_{k})}\right)=\tau\otimes \tau\left(\left(B_k\right)_{I_1}b_{i_{v_1},k}\left(B_k\right)_{I_2}b_{i_{v_2},k}\left(B_k\right)_{I_3}\right),
\end{align*}
according to the value of $\theta= \theta\left(V, V_1^{(w_1)}\ldots, V_k^{(w_k)}\right)\in \{0,1\}^4$, where 
\begin{align*}
    B_k=B_k\left(V_1^{(w_1)},\ldots,V_k^{(w_k)}\right).
\end{align*}
We recall that
\begin{align*}
    \tau\otimes\tau\left(\left(B_k\right)_{I_1}\left(B_k\right)_{I_2}\left(B_k\right)_{I_3}\right)&=\tau\otimes \tau\left(\left(B_k\right)_{[p]\setminus \left(V_1\cup \cdots \cup V_k \cup V\right)}\right)\\
    &=\tau\otimes \tau\left(\pi\setminus\{V_1,\ldots,V_k,V\},V_1^{(w_1)},\ldots,V_k^{(w_k)}\right)
\end{align*}
To simplify the notation, we denote
\begin{align}\label{eq: power of delta}
    h:=|I_1|+|I_2|+|I_3|=p-2(k+1).
\end{align}

We divide the four cases into the next four lemmas.
\begin{lemma}[Case $\theta_1=1$]\label{lemma: case theta_1=1}
    Let $p \ge 4$ be an even integer, $\pi \in P_2^{\operatorname{con}}(p)$. Let $V_1,\ldots,V_k \in \pi$ be blocks, $w \in \{0,1\}^k$ be a coloring of $V_1,\ldots,V_k$ and
    \begin{align*}
        B_k=B_k\left(V_1^{(w_1)},\ldots,V_k^{(w_k)}\right)=\left(b_{i_j,k}\right)_{j \in [p]}.
    \end{align*}
    Let $V=\{v_1<v_2\} \in \pi\setminus \{V_1,\ldots,V_{k}\}$ and the joint law 
    \begin{align*}
        \left(b_{i_{v_1},k},b_{i_{v_2},k}\right)\overset{d}{=}\left(b_{i_{v_1}}(\theta),b_{i_{v_2}}(\theta)\right).
    \end{align*}
    If $\theta_1=1$, we have
    \begin{align*}
        &\tau\otimes\tau \left(\pi\setminus \{V_1,\ldots,V_{k}\},V_1^{(w_1)},\ldots,V_{k}^{(w_{k})}\right)\\
        &=\frac{q}{2}\sum_{w=0,1}\tau\otimes\tau \left(\pi\setminus \{V_1,\ldots,V_k,V\},V_1^{(w_1)},\ldots,V_{k}^{(w_{k})},V^{(w)}\right).
    \end{align*}
\end{lemma}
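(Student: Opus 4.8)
The plan is to use the hypothesis $\theta_1=1$, which means $b_{i_{v_1},k}=b_{i_{v_2},k}=\frac{1}{\delta}(a\otimes a-\lambda^2)$ with $a:=a_{i_{v_1}}$; since $V=\{v_1<v_2\}$ is a block of a pair partition, its index appears nowhere else, so $a$ is free from each $c_j:=(B_k)_{I_j}$ for $j=1,2,3$. Writing the quantity to be computed as $\tau\otimes\tau(c_1\,b_{i_{v_1},k}\,c_2\,b_{i_{v_2},k}\,c_3)$, I would split every factor $a\otimes a-\lambda^2$ into the two simple tensors $a\otimes a$ and $-\lambda^2\,\1\otimes\1$, expand the product, and use that $\tau\otimes\tau$ factorizes over the two legs. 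This reduces everything to traces of the form $\tau(w_1\,a\,w_2\,a\,w_3)$ on each leg, with $a$ free from $w_1,w_2,w_3$; Lemma~\ref{lemma: contribution of ac1ac2} (after a cyclic rearrangement) evaluates each such trace as $\sigma^2\tau(w_2)\tau(w_1w_3)+\lambda^2\tau(w_1w_2w_3)$.

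Expanding accordingly produces four groups, indexed by whether each leg contributes its variance part ($\sigma^2$) or its mean part ($\lambda^2$). First I would check that the pure mean--mean group cancels exactly against the corrections coming from the $-\lambda^2\,\1\otimes\1$ tensors (each of which collapses to $\lambda^2\,\tau\otimes\tau(c_1c_2c_3)$ because $a$ is free of mean $\lambda$), so that only the variance--variance group and the two mixed variance--mean groups survive. The variance--variance group equals $\sigma^4\,\tau\otimes\tau\big((B_k)_{I_2}\big)\,\tau\otimes\tau(c_1c_3)$, the two copies of $a$ pinching off the interior word $(B_k)_{I_2}$ on both legs. I would argue this vanishes: since $\pi\in P_2^{\operatorname{con}}(p)$ with $p\ge4$, the block $V$ must cross some $U\in\pi$, and because $\theta_1=1$ means none of $V_1,\dots,V_k$ cross $V$, we have $U\notin\{V_1,\dots,V_k\}$, so $U$ contributes a leg inside $I_2$ whose partner lies in $I_1\cup I_3$. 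Hence $(B_k)_{I_2}$ has a position whose index is unique within $I_2$, and Lemma~\ref{lemma: centering} forces $\tau\otimes\tau\big((B_k)_{I_2}\big)=0$.

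It then remains to identify the two mixed groups with the two colourings. I expect the left-variance/right-mean group to equal $\sigma^2\lambda^2 N_0$ and the symmetric one to equal $\sigma^2\lambda^2 N_1$, where $N_w$ denotes the colour-$w$ removal appearing on the right-hand side. Since $\sigma^2\lambda^2/\delta^2=q/2$ by $\delta^2=\sigma^2(\sigma^2+2\lambda^2)$ and the definition of $q$, summing gives the claimed $\frac{q}{2}(N_0+N_1)$. The identification rests on the principle that a crossing-block leg lying inside $I_2$ behaves, within the interior word, as a free singleton contributing its mean $\lambda$: this holds both for the genuine interior leg (whose partner lies outside the traced word) and for the fresh copy $\tilde a$ created by decoupling the left legs. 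Consequently decoupling changes neither the interior trace on $I_2$ nor the exterior trace on $I_1\cup I_3$, while freeness lets one factor the interior $I_2$-word off the exterior $I_1,I_3$-word in exactly the way the variance term prescribes.

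The main obstacle is this last identification: one must verify, termwise over the tensor expansion, that the variance--mean groups produced analytically coincide with the combinatorially defined coloured removals $N_0,N_1$. Concretely this amounts to checking that substituting fresh free copies for the severed crossing legs leaves every relevant single-leg trace invariant (the singleton-equals-$\lambda$ principle), that the freeness-induced factorization of the split leg matches the decoupled object, and that the left/right leg pairing of the tensor expansion is respected throughout. Once this bookkeeping is in place, the remaining computations are a direct application of Lemmas~\ref{lemma: contribution of ac1ac2} and~\ref{lemma: centering}.
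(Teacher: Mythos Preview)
Your proposal is correct and follows essentially the same approach as the paper: expand the two occurrences of $a\otimes a-\lambda^2$ (and implicitly the remaining $b$'s) into simple tensors, apply Lemma~\ref{lemma: contribution of ac1ac2} on each leg to produce the four $\sigma^4,\sigma^2\lambda^2,\sigma^2\lambda^2,\lambda^4$ groups, kill the $\sigma^4$ group via Lemma~\ref{lemma: centering} using a crossing block of $V$ (your observation that $\theta_1=1$ forces this block to lie outside $\{V_1,\dots,V_k\}$ is exactly what the paper uses implicitly), cancel the $\lambda^4$ group against the $-\lambda^2$ corrections, and identify the two mixed groups with the coloured removals $V^{(0)},V^{(1)}$ via the freeness/equality-in-distribution argument you sketch. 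The paper carries out the last identification by writing everything as explicit sums over subsets $J_1,J_2,J_3$ and checking the distributional equalities blockwise, which is precisely the ``bookkeeping'' you flag as the main obstacle.
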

\begin{proof}
    To simplify the notation, let
    \begin{align*}
        T:=\tau\otimes\tau \left(\pi\setminus \{V_1,\ldots,V_{k}\},V_1^{(w_1)},\ldots,V_{k}^{(w_{k})}\right).
    \end{align*}
    Then, we must prove that
    \begin{align*}
        T=\frac{q}{2}\sum_{w=0,1}\tau\otimes\tau \left(\pi\setminus \{V_1,\ldots,V_k,V\},V_1^{(w_1)},\ldots,V_{k}^{(w_{k})},V^{(w)}\right).
    \end{align*}
    As $\theta_1=1$, we can write
    \begin{align*}
        &b_{i_{v_1},k}=\frac{1}{\delta}\left(a\otimes a-\lambda^2\right);\\
        &b_{i_{v_2},k}=\frac{1}{\delta}\left(a\otimes a-\lambda^2\right),
    \end{align*}
    where $a:=a_{i_{v_1}}$. We get by Lemma \ref{lemma: centering} that
    \begin{align}\label{eq: value of T}
        T&=\frac{1}{\delta^2}\tau\otimes \tau\left\{\left(B_k\right)_{I_1}a\otimes a\left(B_k\right)_{I_2}a\otimes a\left(B_k\right)_{I_3}\right\}\nonumber\\
    & \quad -\frac{\lambda^4}{\delta^2}\tau\otimes \tau\left(\pi\setminus\{V_1,\ldots,V_k,V\},V_1^{(w_1)},\ldots,V_k^{(w_k)}\right).
    \end{align}
    Let us write 
    \begin{align*}
        B_k=\frac{1}{\delta}\left(a_{i_j}^{(l)}\otimes a_{i_j}^{(r)}-\lambda^2\right)_{j \in [p]}.
    \end{align*}
    Then
    \begin{align*}
        &\tau\otimes \tau\left\{\left(B_k\right)_{I_1}a\otimes a\left(B_k\right)_{I_2}a\otimes a\left(B_k\right)_{I_3}\right\}\\
        &=\frac{1}{\delta^{h}}\sum_{\substack{J_1 \subseteq I_1\\ J_2 \subseteq I_2\\ J_3 \subseteq I_3}}(-1)^{|J_1|+|J_2|+|J_3|}\lambda^{2(|J_1|+|J_2|+|J_3|)}\tau\left(a_{J_1^c}^{(l)}\, a \, a_{J_2^c}^{(l)}\, a \, a_{J_3^c}^{(l)}\right)\tau\left(a_{J_1^c}^{(r)}\, a \, a_{J_2^c}^{(r)}\, a\, a_{J_3^c}^{(r)}\right),
    \end{align*}
    where $h$ is defined as in \eqref{eq: power of delta}. We use Lemma \ref{lemma: contribution of ac1ac2} and compute
    \begin{align*}
        &R(J_1,J_2,J_3) \\
        &:=\tau\left(a_{J_1^c}^{(l)}\, a \, a_{J_2^c}^{(l)}\, a \, a_{J_3^c}^{(l)}\right)\tau\left(a_{J_1^c}^{(r)}\, a \, a_{J_2^c}^{(r)}\, a\, a_{J_3^c}^{(r)}\right)\\
        &=\left(\sigma^2\tau(a_{J_1^c}^{(l)}\, a_{J_3^c}^{(l)})\tau(a_{J_2^c}^{(l)})+\lambda^2\tau(a_{J_1^c}^{(l)}\, a_{J_2^c}^{(l)}\, a_{J_3^c}^{(l)})\right)
        \left(\sigma^2\tau(a_{J_1^c}^{(r)}\, a_{J_3^c}^{(r)})\tau(a_{J_2^c}^{(r)})+\lambda^2\tau(a_{J_1^c}^{(r)}\, a_{J_2^c}^{(r)}\, a_{J_3^c}^{(r)})\right).
    \end{align*}
    After the expansion, we have
    \begin{align*}
        R(J_1,J_2,J_3)=:\sigma^4R_1(J_1,J_2,J_3)+\sigma^2\lambda^2 \left(R_2(J_1,J_2,J_3)+R_3(J_1,J_2,J_3)\right)+\lambda^4R_4(J_1,J_2,J_3),
    \end{align*}
    where we choose $R_2$ to be the term with factor $\tau(a_{J_1^c}^{(l)}\, a_{J_2^c}^{(l)}\, a_{J_3^c}^{(l)})$. For $R_1$, we have
    \begin{align*}
        &\frac{1}{\delta^h}\sum_{\substack{J_1 \subseteq I_1\\ J_2 \subseteq I_2\\ J_3 \subseteq I_3}}(-1)^{|J_1|+|J_2|+|J_3|}\lambda^{2(|J_1|+|J_2|+|J_3|)}R_1(J_1,J_2,J_3)\\
        &=\tau\otimes \tau \left(\left(B_k\right)_{I_1}\left(B_k\right)_{I_3}\right)\tau\otimes \tau\left(\left(B_k\right)_{I_2}\right).
    \end{align*}
    Since $\pi$ is connected, $V$ crosses at least one block $U=\{j,j'\} \in \pi$. Therefore, we assume $j \in I_2$ whose matching symbol $j' \in I_1\cup I_3$. By Lemma \ref{lemma: centering}, we have
    \begin{align*}
        \tau\otimes \tau\left(\left(B_k\right)_{I_2}\right)=0,
    \end{align*}
    and the contribution of $R_1$ is zero. For $R_4$, we have
    \begin{align*}
        &\frac{1}{\delta^h}\sum_{\substack{J_1 \subseteq I_1\\ J_2 \subseteq I_2\\ J_3 \subseteq I_3}}(-1)^{|J_1|+|J_2|+|J_3|}\lambda^{2(|J_1|+|J_2|+|J_3|)}R_4(J_1,J_2,J_3)\\
        &=\tau\otimes \tau \left(\left(B_k\right)_{I_1}\left(B_k\right)_{I_2}\left(B_k\right)_{I_3}\right).
    \end{align*}
    Since
    \begin{align*}
        \tau\otimes \tau \left(\left(B_k\right)_{I_1}\left(B_k\right)_{I_2}\left(B_k\right)_{I_3}\right)=\tau\otimes\tau\left(\pi\setminus\{V_1,\ldots,V_k,V\},V_1^{(w_1)},\ldots,V_k^{(w_k)}\right),
    \end{align*}
    its contribution cancels out with $\frac{\lambda^4}{\delta^2}\tau\otimes \tau\left(\pi\setminus\{V_1,\ldots,V_k,V\},V_1^{(w_1)},\ldots,V_k^{(w_k)}\right)$ in \eqref{eq: value of T}. For $R_2$, let us first denote
    \begin{align*}
        B_{k+1}=B_{k+1}\left(V_1^{(w_1)},\ldots,V_{k}^{(w_k)},V^{(1)}\right)=:\frac{1}{\delta}\left(\overline{a}_{i_j}^{(l)}\otimes \overline{a}_{i_j}^{(r)}-\lambda^2\right)_{j \in [p]}.
    \end{align*}
    We then have
    \begin{align*}
        &\tau\otimes\tau\left(\pi\setminus\{V_1,\ldots,V_k,V\},V_1^{(w_1)},\ldots,V_{k}^{(w_k)},V^{(1)}\right)\\
        &=\frac{1}{\delta^h}\sum_{\substack{J_1 \subseteq I_1\\ J_2 \subseteq I_2\\ J_3 \subseteq I_3}}(-1)^{|J_1|+|J_2|+|J_3|}\lambda^{2(|J_1|+|J_2|+|J_3|)}\tau\left(\overline{a}_{J_1^c}^{(l)}\, \overline{a}_{J_2^c}^{(l)}\, \overline{a}_{J_3^c}^{(l)}\right)\tau\left(\overline{a}_{J_1^c}^{(r)}\, \overline{a}_{J_2^c}^{(r)}\, \overline{a}_{J_3^c}^{(r)}\right).
    \end{align*}
    By definition, the color $w_{k+1}=1$ does not affect the left legs, hence
    \begin{align*}
        \overline{a}_{i_j}^{(l)}=a_{i_j}^{(l)}.
    \end{align*}
    For the right legs and a block $U=\{t_1<t_2\}$, let us divide it into two cases.
    \begin{enumerate}
        \item\label{case 1} If $U$ does not cross $V$, we have
        \begin{align*}
            &\overline{a}_{i_{t_1}}^{(r)}=a_{i_{t_1}}^{(r)};\\
            &\overline{a}_{i_{t_2}}^{(r)}=a_{i_{t_2}}^{(r)}.
        \end{align*}
        \item\label{case 2} Otherwise, either $t_1 \in I_2$ and $t_2 \in I_1\cup I_3$ or $t_1 \in I_1 \cup I_3$ and $t_2 \in I_2$. By the definition of the color $w_{k+1}=1$, we have
        \begin{align*}
            &\overline{a}_{i_{t_1}}^{(r)}=\tilde{a}_{i_{t_1}};\\
            &\overline{a}_{i_{t_2}}^{(r)}=a_{i_{t_1}}.
        \end{align*}
    \end{enumerate}
    Since $(a_k)_{k \in \N}$ and $(\tilde{a}_k)_{k \in \N}$ are free i.i.d copies of $a$, the following holds. First, let $t \in I_2$. Consider $t'$ its matching symbol, that is, $\{t,t'\} \in \pi$ and $i_t=i_{t'} \ne i_j$, for all $j \notin \{t,t'\}$. Recall that, by the definition in \eqref{equation: a_j is either a or tilde a}, we have
    \begin{align*}
        \overline{a}_{i_t}^{(r)},\overline{a}_{i_{t'}}^{(r)} \in \{a_{i_t},\tilde{a}_{i_t}\}.
    \end{align*}
    If $t' \in I_2$, Case \eqref{case 1} implies that the law of $(\overline{a}_{i_t}^{(r)},\overline{a}_{i_{t'}}^{(r)})$ is equal to the law of $({a}_{i_t}^{(r)},{a}_{i_{t'}}^{(r)})$. If $t' \in I_1\cup I_3$, as $a_{i_t},\tilde{a}_{i_t}$ are free i.i.d and free from $(a_k)_{k \ne i_t},(\tilde{a}_k)_{k \ne i_t}$, we have
    \begin{align}\label{eq: equality in distribution t' not in I_2}
        \left((\overline{a}_{i_j}^{(r)})_{j \in I_2\setminus\{t\}},\overline{a}_{i_t}^{(r)}\right)\overset{d}{=}\left((\overline{a}^{(r)}_{i_j})_{j \in I_2\setminus\{t\}},a_{i_t}\right)\overset{d}{=}\left((\overline{a}^{(r)}_{i_j})_{j \in I_2\setminus\{t\}},a_{i_t}^{(r)}\right).
    \end{align}
    Applying the above equalities in distribution for all $t \in I_2$, namely, Case \eqref{case 1} for $t' \in I_2$ and \eqref{eq: equality in distribution t' not in I_2} for $t' \notin I_2$, we get
    \begin{align*}
        \left(\overline{a}_{i_j}^{(r)}\right)_{j \in I_2}\overset{d}{=}\left(a_{i_j}^{(r)}\right)_{j \in I_2}.
    \end{align*}
    By symmetry, we also have
    \begin{align*}
        \left(\overline{a}_{i_j}^{(r)}\right)_{j \in I_1\cup I_3}\overset{d}{=}\left(a_{i_j}^{(r)}\right)_{j \in I_1\cup I_3}.
    \end{align*}
    Additionally, $\left(\overline{a}_{i_j}^{(r)}\right)_{j \in I_2}$ is now free from $\left(\overline{a}_{i_j}^{(r)}\right)_{j \in I_1\cup I_3}$. Indeed, let us show that each variable $\overline{a}_{i_t}^{(r)}$ is free from $\left(\overline{a}_{i_j}^{(r)}\right)_{j \in I_1\cup I_3}$, for all $t \in I_2$. Consider $t'$ its matching symbol. First, if $t' \in I_2$, the fact that $(a_k)_{k \in \N}$ and $(\tilde{a}_k)_{k \in \N}$ are free i.i.d collections implies that $\overline{a}_{i_t}^{(r)}$ (and $\overline{a}_{i_{t'}}^{(r)}$) is free from $\left(\overline{a}_{i_j}^{(r)}\right)_{j \in I_1\cup I_3}$. If $t' \in I_1\cup I_3$, Case \eqref{case 2} implies that $\overline{a}_{i_t}^{(r)}$ is free from $\overline{a}_{i_{t'}}^{(r)}$ and then again $\overline{a}_{i_t}^{(r)}$ is free from $\left(\overline{a}_{i_j}^{(r)}\right)_{j \in I_1\cup I_3}$. 
    
    In summary, by freeness, we have
    \begin{align*}
        \tau\left(\overline{a}_{J_1^c}^{(l)}\, \overline{a}_{J_2^c}^{(l)}\, \overline{a}_{J_3^c}^{(l)}\right)\tau\left(\overline{a}_{J_1^c}^{(r)}\, \overline{a}_{J_2^c}^{(r)}\, \overline{a}_{J_3^c}^{(r)}\right)&=\tau\left(a_{J_1^c}^{(l)}\, a_{J_2^c}^{(l)}\, a_{J_3^c}^{(l)}\right)\tau\left(a_{J_1^c}^{(r)}\, a_{J_3^c}^{(r)}\right)\tau\left(a_{J_2^c}^{(r)}\right)\\
        &=R_2(J_1,J_2,J_3).
    \end{align*}
    Therefore
    \begin{align*}
        &\frac{1}{\delta^h}\sum_{\substack{J_1 \subseteq I_1\\ J_2 \subseteq I_2\\ J_3 \subseteq I_3}}(-1)^{|J_1|+|J_2|+|J_3|}\lambda^{2(|J_1|+|J_2|+|J_3|)}R_2(J_1,J_2,J_3)\\
        &=\tau\otimes \tau\left(\pi\setminus\{V_1,\ldots,V_k,V\},V_1^{(w_1)},\ldots,V_k^{(w_k)},V^{(1)}\right),
    \end{align*}
    Following the same reasoning, it follows that
    \begin{align*}
        &\frac{1}{\delta^h}\sum_{\substack{J_1 \subseteq I_1\\ J_2 \subseteq I_2\\ J_3 \subseteq I_3}}(-1)^{|J_1|+|J_2|+|J_3|}\lambda^{2(|J_1|+|J_2|+|J_3|)}R_3(J_1,J_2,J_3)\\
        &=\tau\otimes \tau\left(\pi\setminus\{V_1,\ldots,V_k,V\},V_1^{(w_1)},\ldots,V_k^{(w_k)},V^{(0)}\right).
    \end{align*}
    Combining the above relations in $T$, we get
    \begin{align*}
        T=\frac{\sigma^2\lambda^2}{\delta^2}\sum_{w=0,1}\tau\otimes \tau\left(\pi\setminus\{V_1,\ldots,V_k,V\},V_1^{(w_1)},\ldots,V_k^{(w_k)},V^{(w)}\right).
    \end{align*}
    It remains to note that 
    \begin{align*}
        \frac{\sigma^2\lambda^2}{\delta^2}=\frac{q}{2},
    \end{align*}
    to finish the proof. 
\end{proof}

\begin{lemma}[Case $\theta_2=1$]\label{lemma: case theta_2=1}
    Let $p \ge 4$ be an even integer, $\pi \in P_2^{\operatorname{con}}(p)$. Let $V_1,\ldots,V_k \in \pi$ be blocks, $w \in \{0,1\}^k$ be a coloring of $V_1,\ldots,V_k$ and
    \begin{align*}
        B_k=B_k\left(V_1^{(w_1)},\ldots,V_k^{(w_k)}\right)=\left(b_{i_j,k}\right)_{j \in [p]}.
    \end{align*}
    Let $V=\{v_1<v_2\} \in \pi\setminus \{V_1,\ldots,V_{k}\}$ and the joint law 
    \begin{align*}
        \left(b_{i_{v_1},k},b_{i_{v_2},k}\right)\overset{d}{=}\left(b_{i_{v_1}}(\theta),b_{i_{v_2}}(\theta)\right).
    \end{align*}
    If $\theta_2=1$, we have
    \begin{align*}
        &\tau\otimes\tau \left(\pi\setminus \{V_1,\ldots,V_{k}\},V_1^{(w_1)},\ldots,V_{k}^{(w_{k})}\right)\\
        &=\frac{q}{2}\tau\otimes\tau \left(\pi\setminus \{V_1,\ldots,V_{k},V\},V_1^{(w_1)},\ldots,V_{k}^{(w_{k})},V^{(1)}\right).
    \end{align*}
\end{lemma}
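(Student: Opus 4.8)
The plan is to run the same argument as in the proof of Lemma~\ref{lemma: case theta_1=1}, the only change being that the left leg of $b_{i_{v_1},k}$ now carries a fresh copy $\tilde a:=\tilde a_{i_{v_1}}$ instead of $a:=a_{i_{v_1}}$. Writing $T$ for the left-hand side and substituting
\[
b_{i_{v_1},k}=\tfrac1\delta\big(\tilde a\otimes a-\lambda^2\big),\qquad b_{i_{v_2},k}=\tfrac1\delta\big(a\otimes a-\lambda^2\big),
\]
I would expand $b_{i_{v_1},k}(B_k)_{I_2}b_{i_{v_2},k}$ into four terms. In the three terms carrying at least one factor $\lambda^2$, one of the positions $v_1,v_2$ is replaced by a scalar, so the tensor surviving at the other position has the unique index $i_{v_1}$ on both legs (recall $\tilde a$ is fresh); applying the centering Lemma~\ref{lemma: centering} to each and summing, these collapse, exactly as in Lemma~\ref{lemma: case theta_1=1}, to
\[
T=\tfrac1{\delta^2}\,\tau\otimes\tau\big((B_k)_{I_1}(\tilde a\otimes a)(B_k)_{I_2}(a\otimes a)(B_k)_{I_3}\big)-\tfrac{\lambda^4}{\delta^2}\,\tau\otimes\tau\big(\pi\setminus\{V_1,\ldots,V_k,V\},V_1^{(w_1)},\ldots,V_k^{(w_k)}\big).
\]

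I would then expand the main term over subsets $J_1\subseteq I_1,J_2\subseteq I_2,J_3\subseteq I_3$, factoring $\tau\otimes\tau$ into a left trace times a right trace. The new feature is the left trace $\tau\big(a^{(l)}_{J_1^c}\,\tilde a\,a^{(l)}_{J_2^c}\,a\,a^{(l)}_{J_3^c}\big)$: both $\tilde a$ and the left leg $a=a_{i_{v_1}}$ of $v_2$ are free from all surrounding factors and occur exactly once, so each factors out with mean $\lambda$, leaving $\lambda^2\,\tau\big(a^{(l)}_{J_1^c}a^{(l)}_{J_2^c}a^{(l)}_{J_3^c}\big)$. The right trace $\tau\big(a^{(r)}_{J_1^c}\,a\,a^{(r)}_{J_2^c}\,a\,a^{(r)}_{J_3^c}\big)$ is identical to the one in Lemma~\ref{lemma: case theta_1=1}, so Lemma~\ref{lemma: contribution of ac1ac2} splits it into a $\sigma^2$-piece $\sigma^2\tau(a^{(r)}_{J_1^c}a^{(r)}_{J_3^c})\tau(a^{(r)}_{J_2^c})$ and a $\lambda^2$-piece $\lambda^2\tau(a^{(r)}_{J_1^c}a^{(r)}_{J_2^c}a^{(r)}_{J_3^c})$.

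Assembling the product, the term coming from the right $\lambda^2$-piece reconstitutes $\tfrac{\lambda^4}{\delta^2}\,\tau\otimes\tau(\pi\setminus\{V_1,\ldots,V_k,V\},\ldots)$ and cancels the negative term above, while the term from the $\sigma^2$-piece carries the prefactor $\tfrac{\sigma^2\lambda^2}{\delta^2}=\tfrac q2$. Its summand is the left-connected, right-split expression, which by the very freeness computation used for $R_2$ in the proof of Lemma~\ref{lemma: case theta_1=1} equals $\tau\otimes\tau(\pi\setminus\{V_1,\ldots,V_k,V\},V_1^{(w_1)},\ldots,V_k^{(w_k)},V^{(1)})$; this yields the claim. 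The one subtlety to flag—and the reason a single coloring $V^{(1)}$ survives here, rather than the symmetric sum over $w\in\{0,1\}$ of Lemma~\ref{lemma: case theta_1=1}—is that the fresh $\tilde a$ already decouples the two left legs of $V$, so the left trace can never produce the $\sigma^2$ coupling. Only the right legs can, which kills the $V^{(0)}$ (left-split) contribution and leaves exactly the $V^{(1)}$ term.
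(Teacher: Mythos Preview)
Your proposal is correct and follows essentially the same approach as the paper's proof: both derive the formula for $T$ via Lemma~\ref{lemma: centering}, expand over subsets $J_1,J_2,J_3$, factor the left trace as $\lambda^2\tau(a_{J_1^c}^{(l)}a_{J_2^c}^{(l)}a_{J_3^c}^{(l)})$ using freeness of $\tilde a$ and $a$ from the surrounding factors, split the right trace via Lemma~\ref{lemma: contribution of ac1ac2}, cancel the $\lambda^4$ term, and identify the surviving $\sigma^2\lambda^2$ term with the $V^{(1)}$ contribution by invoking the $R_2$ computation from Lemma~\ref{lemma: case theta_1=1}. Your closing explanation of why only the $V^{(1)}$ coloring survives is a nice addition.
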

\begin{proof}
    We must prove that
    \begin{align*}
        T&:=\tau\otimes\tau \left(\pi\setminus \{V_1,\ldots,V_{k}\},V_1^{(w_1)},\ldots,V_{k}^{(w_{k})}\right)\\
        &=\frac{q}{2}\tau\otimes\tau \left(\pi\setminus \{V_1,\ldots,V_{k},V\},V_1^{(w_1)},\ldots,V_{k}^{(w_{k})},V^{(1)}\right).
    \end{align*}
     As $\theta_2=1$, we can write
    \begin{align*}
        &b_{i_{v_1},k}=\frac{1}{\delta}\left(\tilde{a}\otimes a-\lambda^2\right);\\
        &b_{i_{v_2},k}=\frac{1}{\delta}\left(a\otimes a-\lambda^2\right),
    \end{align*}
    where $a:=a_{i_{v_1}}$. We have
    \begin{align}\label{eq: T case theta_2=1}
        T&=\frac{1}{\delta^2}\tau\otimes \tau\left\{\left(B_k\right)_{I_1}\tilde{a}\otimes a\left(B_k\right)_{I_2}a\otimes a\left(B_k\right)_{I_3}\right\}\nonumber \\
    &\quad -\frac{\lambda^4}{\delta^2}\tau\otimes \tau\left(\pi\setminus\{V_1,\ldots,V_k,V\},V_1^{(w_1)},\ldots,V_k^{(w_k)}\right).
    \end{align}
    We write again
    \begin{align*}
        B_k=\frac{1}{\delta}\left(a_{i_j}^{(l)}\otimes a_{i_j}^{(r)}-\lambda^2\right)_{j \in [p]}.
    \end{align*}
    Then
    \begin{align*}
        &\tau\otimes \tau\left\{\left(B_k\right)_{I_1}\tilde{a}\otimes a\left(B_k\right)_{I_2}a\otimes a\left(B_k\right)_{I_3}\right\}\\
        &=\frac{1}{\delta^h}\sum_{\substack{J_1 \subseteq I_1\\ J_2 \subseteq I_2\\ J_3 \subseteq I_3}}(-1)^{|J_1|+|J_2|+|J_3|}\lambda^{2(|J_1|+|J_2|+|J_3|)}\tau\left(a_{J_1^c}^{(l)}\, \tilde{a} \, a_{J_2^c}^{(l)}\, a \, a_{J_3^c}^{(l)}\right)\tau\left(a_{J_1^c}^{(r)}\, a \, a_{J_2^c}^{(r)}\, a\, a_{J_3^c}^{(r)}\right),
    \end{align*}
    where $h$ is defined in \eqref{eq: power of delta}. We use Lemmas \ref{lemma: contribution of ac1ac2} and \ref{lemma: centering} to compute
    \begin{align*}
        R(J_1,J_2,J_3)&:=\tau\left(a_{J_1^c}^{(l)}\, \tilde{a} \, a_{J_2^c}^{(l)}\, a \, a_{J_3^c}^{(l)}\right)\tau\left(a_{J_1^c}^{(r)}\, a \, a_{J_2^c}^{(r)}\, a\, a_{J_3^c}^{(r)}\right)\\
        &=\lambda^2\tau\left(a_{J_1^c}^{(l)}\, a_{J_2^c}^{(l)}\, a_{J_3^c}^{(l)}\right)\left(\sigma^2\tau(a_{J_1^c}^{(r)}\, a_{J_3^c}^{(r)})\tau(a_{J_2^c}^{(r)})+\lambda^2\tau(a_{J_1^c}^{(r)}\, a_{J_2^c}^{(r)}\, a_{J_3^c}^{(r)})\right)
    \end{align*}
    Hence
    \begin{align*}
        R(J_1,J_2,J_3)&=\lambda^2\sigma^2\tau\left(a_{J_1^c}^{(l)}\, a_{J_2^c}^{(l)}\, a_{J_3^c}^{(l)}\right)\tau\left(a_{J_1^c}^{(r)}\, a_{J_3^c}^{(r)}\right)\tau\left(a_{J_2^c}^{(r)}\right) \\
        &\quad+\lambda^4\tau\left(a_{J_1^c}^{(l)}\, a_{J_2^c}^{(l)}\, a_{J_3^c}^{(l)}\right)\tau\left(a_{J_1^c}^{(r)}\, a_{J_2^c}^{(r)}\, a_{J_3^c}^{(r)}\right).
    \end{align*}
    Both terms already appeared in the proof of Lemma \ref{lemma: case theta_1=1}. The second term cancels out with $\frac{\lambda^4}{\delta^2}\tau\otimes \tau\left(\pi\setminus\{V_1,\ldots, V_k,V\}, V_1^{(w_1)},\ldots, V_k^{(w_k)}\right)$ in \eqref{eq: T case theta_2=1}, and the contribution of the first is equal to
    \begin{align*}
        &\frac{1}{\delta^h}\sum_{\substack{J_1 \subseteq I_1\\ J_2 \subseteq I_2\\ J_3 \subseteq I_3}}(-1)^{|J_1|+|J_2|+|J_3|}\lambda^{2(|J_1|+|J_2|+|J_3|)}\tau\left(a_{J_1^c}^{(l)}\, a_{J_2^c}^{(l)}\, a_{J_3^c}^{(l)}\right)\tau\left(a_{J_1^c}^{(r)}\, a_{J_3^c}^{(r)}\right)\tau\left(a_{J_2^c}^{(r)}\right)\\
        &=\tau\otimes \tau\left(\pi\setminus\{V_1,\ldots,V_k,V\},V_1^{(w_1)},\ldots,V_k^{(w_k)},V^{(1)}\right).
    \end{align*}
    We then have
    \begin{align*}
        T=\frac{\sigma^2\lambda^2}{\delta^2}\tau\otimes \tau\left(\pi\setminus\{V_1,\ldots,V_k,V\},V_1^{(w_1)},\ldots,V_k^{(w_k)},V^{(1)}\right),
    \end{align*}
    and the result follows by the definition of $q$.
\end{proof}

The case $\theta_3=1$ is symmetric to the case $\theta_2=1$ in Lemma \ref{lemma: case theta_2=1}, and we omit the proof.
\begin{lemma}[Case $\theta_3=1$]\label{lemma: case theta_3=1}
    Let $p \ge 4$ be an even integer, $\pi \in P_2^{\operatorname{con}}(p)$. Let $V_1,\ldots,V_k \in \pi$ be blocks, $w \in \{0,1\}^k$ be a coloring of $V_1,\ldots,V_k$ and
    \begin{align*}
        B_k=B_k\left(V_1^{(w_1)},\ldots,V_k^{(w_k)}\right)=\left(b_{i_j,k}\right)_{j \in [p]}.
    \end{align*}
    Let $V=\{v_1<v_2\} \in \pi\setminus \{V_1,\ldots,V_{k}\}$ and the joint law 
    \begin{align*}
        \left(b_{i_{v_1},k},b_{i_{v_2},k}\right)\overset{d}{=}\left(b_{i_{v_1}}(\theta),b_{i_{v_2}}(\theta)\right).
    \end{align*}
    If $\theta_3=1$, we have
    \begin{align*}
        &\tau\otimes\tau \left(\pi\setminus \{V_1,\ldots,V_{k}\},V_1^{(w_1)},\ldots,V_{k}^{(w_{k})}\right)\\
        &=\frac{q}{2}\tau\otimes\tau \left(\pi\setminus \{V_1,\ldots,V_{k},V\},V_1^{(w_1)},\ldots,V_{k}^{(w_{k})},V^{(0)}\right).
    \end{align*}
\end{lemma}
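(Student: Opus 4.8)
The plan is to transcribe the proof of Lemma~\ref{lemma: case theta_2=1} verbatim, with the roles of the left and right legs of the tensors interchanged and, correspondingly, the color $w=1$ replaced by $w=0$. Since $\theta_3=1$, we may write
\begin{align*}
    b_{i_{v_1},k} = \frac{1}{\delta}\left(a\otimes \tilde{a}-\lambda^2\right), \qquad b_{i_{v_2},k} = \frac{1}{\delta}\left(a\otimes a-\lambda^2\right),
\end{align*}
with $a:=a_{i_{v_1}}$ and $\tilde{a}:=\tilde{a}_{i_{v_1}}$ a free copy of $a$. First I would apply Lemma~\ref{lemma: centering} exactly as in \eqref{eq: T case theta_2=1} to expand
\begin{align*}
    T &= \frac{1}{\delta^2}\tau\otimes\tau\left\{\left(B_k\right)_{I_1}\, a\otimes \tilde{a}\,\left(B_k\right)_{I_2}\,a\otimes a\,\left(B_k\right)_{I_3}\right\}\\
    &\quad -\frac{\lambda^4}{\delta^2}\tau\otimes\tau\left(\pi\setminus\{V_1,\ldots,V_k,V\},V_1^{(w_1)},\ldots,V_k^{(w_k)}\right),
\end{align*}
and then open the tensor product into the sum over $J_1\subseteq I_1$, $J_2\subseteq I_2$, $J_3\subseteq I_3$ as before.

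The only place where the computation genuinely differs from Lemma~\ref{lemma: case theta_2=1} is the evaluation of
\begin{align*}
    R(J_1,J_2,J_3)=\tau\left(a_{J_1^c}^{(l)}\, a\, a_{J_2^c}^{(l)}\, a\, a_{J_3^c}^{(l)}\right)\tau\left(a_{J_1^c}^{(r)}\, \tilde{a}\, a_{J_2^c}^{(r)}\, a\, a_{J_3^c}^{(r)}\right).
\end{align*}
Now it is the \emph{left} factor that carries two copies of $a$ and the \emph{right} factor that carries the single free insertion $\tilde{a}$. For the left factor I would apply Lemma~\ref{lemma: contribution of ac1ac2} to obtain $\sigma^2\tau(a_{J_1^c}^{(l)} a_{J_3^c}^{(l)})\tau(a_{J_2^c}^{(l)})+\lambda^2\tau(a_{J_1^c}^{(l)} a_{J_2^c}^{(l)} a_{J_3^c}^{(l)})$; for the right factor, since $\tilde{a}$ and the remaining copy of $a$ each appear exactly once and are free from everything else, centering collapses the two sandwiched free elements and yields $\lambda^2\tau(a_{J_1^c}^{(r)} a_{J_2^c}^{(r)} a_{J_3^c}^{(r)})$. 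Multiplying these out produces a $\lambda^4$ term, which cancels the subtracted term in the display for $T$ exactly as in Lemma~\ref{lemma: case theta_2=1}, together with a surviving $\sigma^2\lambda^2$ term in which it is now the \emph{left} leg over $I_2$ that is split off from $I_1\cup I_3$.

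It then remains to identify this survivor. Running the same free-independence bookkeeping as in the proof of Lemma~\ref{lemma: case theta_1=1}, but with the superscripts $(l)$ and $(r)$ exchanged, the splitting of the left legs over $I_2$ is precisely the one produced by recoloring $V$ with $w=0$, so that
\begin{align*}
    T=\frac{\sigma^2\lambda^2}{\delta^2}\,\tau\otimes\tau\left(\pi\setminus\{V_1,\ldots,V_k,V\},V_1^{(w_1)},\ldots,V_k^{(w_k)},V^{(0)}\right),
\end{align*}
and the claim follows from $\sigma^2\lambda^2/\delta^2=q/2$. The one subtlety worth checking carefully is this final identification, namely that transposing the two legs throughout the argument of Lemma~\ref{lemma: case theta_2=1} really sends the color $w=1$ to $w=0$; this is exactly the left--right symmetry built into cases \ref{case cross w=0} and \ref{case cross w=1}, which are mirror images under swapping the two legs of each tensor. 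I expect this bookkeeping to be the main (though minor) obstacle, since the remainder of the argument is a line-by-line copy of the $\theta_2=1$ computation.
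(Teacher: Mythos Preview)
Your proposal is correct and is precisely the symmetry argument the paper invokes: the paper simply states that the case $\theta_3=1$ is symmetric to Lemma~\ref{lemma: case theta_2=1} and omits the details, while you have written them out by interchanging the left and right legs and replacing the color $w=1$ with $w=0$. The identification of the surviving $\sigma^2\lambda^2$ term with $V^{(0)}$ is exactly the $R_3$ term from the proof of Lemma~\ref{lemma: case theta_1=1}, so nothing further is needed.
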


Finally, the case $\theta_4=1$ has a null contribution.
\begin{lemma}[Case $\theta_4=1$]\label{lemma: case theta_4=1}
    Let $p \ge 4$ be an even integer, $\pi \in P_2^{\operatorname{con}}(p)$. Let $V_1,\ldots,V_k \in \pi$ be blocks, $w \in \{0,1\}^k$ be a coloring of $V_1,\ldots,V_k$ and
    \begin{align*}
        B_k=B_k\left(V_1^{(w_1)},\ldots,V_k^{(w_k)}\right)=\left(b_{i_j,k}\right)_{j \in [p]}.
    \end{align*}
    Let $V=\{v_1<v_2\} \in \pi\setminus \{V_1,\ldots,V_{k}\}$ and the joint law 
    \begin{align*}
        \left(b_{i_{v_1},k},b_{i_{v_2},k}\right)\overset{d}{=}\left(b_{i_{v_1}}(\theta),b_{i_{v_2}}(\theta)\right).
    \end{align*}
    If $\theta_4=1$, we have
    \begin{align*}
        &\tau\otimes\tau \left(\pi\setminus \{V_1,\ldots,V_{k}\},V_1^{(w_1)},\ldots,V_{k}^{(w_{k})}\right)=0.
    \end{align*}
\end{lemma}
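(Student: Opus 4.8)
The plan is to exploit the fact that, when $\theta_4=1$, the two endpoints of the block $V$ carry two genuinely independent fresh copies of $a$, so that the variable sitting at one endpoint becomes free from and disjoint from every other letter in the word. Concretely, this is Case \eqref{case cross both w}, in which, up to equality in distribution,
\[
    b_{i_{v_1},k}=\frac{1}{\delta}\big(\tilde a\otimes\tilde a-\lambda^2\big),\qquad b_{i_{v_2},k}=\frac{1}{\delta}\big(a\otimes a-\lambda^2\big),
\]
where $a:=a_{i_{v_1}}$ and $\tilde a:=\tilde a_{i_{v_1}}$ are free copies of $a$. The key observation is that $a$ now appears in the word
\[
    \big(B_k\big)_{I_1}\,b_{i_{v_1},k}\,\big(B_k\big)_{I_2}\,b_{i_{v_2},k}\,\big(B_k\big)_{I_3}
\]
only through the single matched factor $b_{i_{v_2},k}$, while $\tilde a$ appears only through $b_{i_{v_1},k}$.

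First I would verify that $a=a_{i_{v_1}}$ is free from every other variable occurring in this word. The positions in $I_1\cup I_2\cup I_3=[p]\setminus(V_1\cup\cdots\cup V_k\cup V)$ carry labels different from $i_{v_1}$, since $i_{v_1}$ labels only the two endpoints $v_1,v_2$ of $V$. Moreover, the replacement rule defining the $B_k$'s only ever substitutes, at a block crossing the one being removed, fresh copies carrying the label of that crossing block; hence it never plants the label $i_{v_1}$ at a position outside $V$. Consequently, every letter coming from $(B_k)_{I_1},(B_k)_{I_2},(B_k)_{I_3}$ involves only variables $a_m,\tilde a_m$ with $m\neq i_{v_1}$, all of which are free from $a_{i_{v_1}}$; and the remaining factor $b_{i_{v_1},k}$ involves only $\tilde a_{i_{v_1}}$, itself a free copy of $a_{i_{v_1}}$ and therefore free from it.

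With this disjointness secured, I would conclude by Lemma \ref{lemma: centering}. The factor $b_{i_{v_2},k}=\frac{1}{\delta}(a\otimes a-\lambda^2)$ is a centered matched tensor both of whose legs are free from all the remaining letters of the word; by cyclicity of the trace we may bring it to the front, and the argument of Lemma \ref{lemma: centering} (which only requires that the legs of the distinguished tensor be free from the other letters, regardless of whether those letters are themselves matched tensors, exactly as it was already used in the proof of Lemma \ref{lemma: case theta_1=1}) forces the whole trace to vanish. Hence $T=0$, as claimed.

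The only genuinely delicate point is the bookkeeping of the coloring process in the second step: one must confirm that after all $k$ removals no position outside $V$ has acquired the label $i_{v_1}$, and that the fresh copy planted at $v_1$ is free from the original $a_{i_{v_1}}$ still sitting at $v_2$. Once this is in place, the vanishing is an immediate consequence of the centering lemma and requires no further computation, in contrast with the cases $\theta_1=1$ and $\theta_2=1$ where one had to track several surviving terms.
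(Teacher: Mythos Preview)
Your proposal is correct and follows essentially the same approach as the paper: both arguments hinge on the observation that, when $\theta_4=1$, the variable $a_{i_{v_1}}$ sitting in $b_{i_{v_2},k}$ is free from every other letter in the word (including the fresh copy $\tilde a_{i_{v_1}}$ at position $v_1$), and then invoke Lemma~\ref{lemma: centering}. The only cosmetic difference is that the paper first expands $b_{i_{v_1},k}$ and $b_{i_{v_2},k}$ and applies the second part of Lemma~\ref{lemma: centering} twice to the bare tensors $\tilde a\otimes\tilde a$ and $a\otimes a$, producing two $\lambda^4$ terms that cancel, whereas you apply the vanishing conclusion of the first part directly to the centered factor $b_{i_{v_2},k}$.
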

\begin{proof}
    Similarly to the previous proofs, we must show that
    \begin{align*}
        T:=\tau\otimes\tau \left(\pi\setminus \{V_1,\ldots,V_{k}\},V_1^{(w_1)},\ldots,V_{k}^{(w_{k})}\right) =0.
    \end{align*}
     As $\theta_4=1$, we can write
    \begin{align*}
        &b_{i_{v_1},k}=\frac{1}{\delta}\left(\tilde{a}\otimes \tilde{a}-\lambda^2\right);\\
        &b_{i_{v_2},k}=\frac{1}{\delta}\left(a\otimes a-\lambda^2\right),
    \end{align*}
    where $a:=a_{i_{v_1}}$. First note that
    \begin{align*}
        T&=\frac{1}{\delta^2}\tau\otimes \tau\left\{\left(B_k\right)_{I_1}\tilde{a}\otimes \tilde{a}\left(B_k\right)_{I_2}a\otimes a\left(B_k\right)_{I_3}\right\}\\
    &\quad-\frac{\lambda^4}{\delta^2}\tau\otimes \tau\left(\pi\setminus\{V_1,\ldots,V_k,V\},V_1^{(w_1)},\ldots,V_k^{(w_k)}\right).
    \end{align*}
    Applying Lemma \ref{lemma: centering} to both tensors, we get that
    \begin{align*}
        \tau\otimes \tau\left\{\left(B_k\right)_{I_1}\tilde{a}\otimes \tilde{a}\left(B_k\right)_{I_2}a\otimes a\left(B_k\right)_{I_3}\right\}=\lambda^4\tau\otimes \tau\left\{\left(B_k\right)_{I_1}\left(B_k\right)_{I_2}\left(B_k\right)_{I_3}\right\}.
    \end{align*}
    The result follows by recalling that
    \begin{align*}
        \tau\otimes \tau\left\{\left(B_k\right)_{I_1}\left(B_k\right)_{I_2}\left(B_k\right)_{I_3}\right\}&=\tau\otimes\tau \left(\left(B_k\right)_{[p]\setminus\left(V_1\cup \cdots \cup V_k \cup V\right)}\right)\\
        &=\tau\otimes \tau\left(\pi\setminus\{V_1,\ldots,V_k,V\},V_1^{(w_1)},\ldots,V_k^{(w_k)}\right).
    \end{align*}
\end{proof}

\begin{proof}[Proof of Proposition \ref{proposition: contribution bipartite connected part}]
    Let $\pi \notin P_2^{\operatorname{bicon}}(p)$, and let $2l^*+1$ be the length of the smallest odd cycle in the intersection graph of $\pi$. We denote $V_1,\ldots, V_{2l^*+1} \in \pi$ (any) ordered sequence of blocks in this smallest odd cycle, where $V_{j}$ crosses $V_{j+1}$ for each $j=1,\ldots,2l^*+1$ and $V_{2l^*+2}:=V_1$. We first remove the first block $V_1$, and by Lemma \ref{lemma: case theta_1=1} we get that
    \begin{align*}
        \tau\otimes\tau(\pi)=\frac{q}{2}\sum_{w=0,1}\tau\otimes\tau\left(\pi\setminus V_1,V_1^{(w)}\right).
    \end{align*}
    However, as it is the first block we remove, the distribution of $B_0=(b_{i_j})_{j \in [p]}$ is symmetric in both legs. In particular, we have
    \begin{align*}
        B_1(V_1^{(0)})\overset{d}{=}B_1(V_1^{(1)}),
    \end{align*}
    and
    \begin{align*}
        \tau\otimes\tau\left(\pi\setminus V_1,V_1^{(0)}\right)=\tau\otimes\tau\left(\pi\setminus V_1,V_1^{(1)}\right).
    \end{align*}
    Hence
    \begin{align*}
        \tau\otimes\tau(\pi)=q\tau\otimes\tau\left(\pi\setminus V_1,V_1^{(0)}\right).
    \end{align*}
    We then fix the color of $V_1$ to be $w_1=0$. We aim to show by induction that the colors $w_1,w_2,\ldots,w_{2l^*}$ of $V_1,\ldots,V_{2l^*}$ are deterministic and alternating, 
    \begin{align*}
        w=(w_1,\ldots,w_{2l^*})=(0,1,0,\ldots,1),
    \end{align*}
    and so is $\theta^{(k+1)}:=\theta\left(V_k,V_1^{(w_1)},\ldots,V_{k}^{(w_{k})}\right)$ for $1 \le k \le 2l^*$ in the sense that
    \begin{align*}
        \left(\theta^{(2)}_2,\theta^{(3)}_3,\theta^{(4)}_2,\ldots,\theta^{(2l^*)}_2\right)=(1,1,1,\ldots,1).
    \end{align*}
    First, as $V_2$ crosses $V_1$ and $w_1=0$, we have $\theta^{(2)}_2=1$. By Lemma \ref{lemma: case theta_2=1}, we have $w_2=1$. Suppose then that the result holds for some $1 < k < 2l^*$. Let us prove that it also holds for $k+1$. To simplify the reading, let us divide into two cases whether $k$ is even or odd.

    \textit{Case $k$ is even.} The induction hypothesis implies that
    \begin{align*}
        &(w_2,w_3,\ldots,w_k)=(1,0,\ldots,1);\\
        &\left(\theta^{(2)}_2,\theta^{(3)}_3,\theta^{(4)}_2,\ldots,\theta^{(k)}_2\right)=(1,1,1,\ldots,1).
    \end{align*}
    Since $V_{k+1}$ crosses $V_k$, we automatically have that $\theta^{(k+1)}_1=0$. Moreover, as $w_k=1$, the definition of $\theta$ implies that
    \begin{align*}
        \theta^{(k+1)}_2=0.
    \end{align*}
    Hence either $\theta^{(k+1)}_4=1$ or $\theta^{(k+1)}_3=1$. Now consider all blocks $V_{j_t}$ that might potentially cross $V_{k+1}$, for $j_t\le k$. Since $2l^*+1$ is the length of the smallest odd cycle, the cycles $(V_{j_t},\ldots, V_{k}, V_{k+1})$ are of even length. By a parity check, $j_t$ must be even, and hence
    \begin{align*}
        w_{j_t}=1.
    \end{align*}
    This implies that all crossing blocks $V_{j_t}$ of $V_{k+1}$ for $j_t \le k$ have the same color $w_{j_t}=1$. In particular, we have
    \begin{align*}
        \theta^{(k+1)}_{3}=1.
    \end{align*}
    We then apply Lemma \ref{lemma: case theta_3=1} to get that $w_{k+1}=0$.

    \textit{Case $k$ is odd.} This follows similarly to the even case. We first have that $\theta^{(k+1)}_1=0$ and as $w_{k}=0$, we have
    \begin{align*}
        \theta^{(k+1)}_3=0.
    \end{align*}
    By the parity check on the even cycles $V_{k+1}$ might belong to, all crossing blocks $V_{j_t}$ of $V_{k+1}$ for $j_t \le k$ have the same color $w_{j_t}=0$. Hence
    \begin{align*}
        \theta^{(k+1)}_2=1,
    \end{align*}
    and Lemma \ref{lemma: case theta_2=1} implies that $w_{k+1}=1$. This finishes the induction.

    To conclude the proof, the block $V_{2l^*+1}$ crosses both $V_1$ of color $w_1=0$ and $V_{2l^*}$ of color $w_{2l^*}=1$. Hence
    \begin{align*}
        \theta^{(2l^*+1)}_4=1.
    \end{align*}
    Lemma \ref{lemma: case theta_4=1} applied to $V=V_{2l^*+1}$ implies that
    \begin{align*}
        \tau\otimes\tau\left(\pi\setminus \{V_1,\ldots,V_{2l^*}\},V_1^{(0)},\ldots,V_{2l^*}^{(1)}\right)=0. 
    \end{align*}
    Applying Lemma \ref{lemma: case theta_2=1} for blocks $V=V_k$ when $k$ is even and Lemma \ref{lemma: case theta_3=1} for $V=V_k$ when $k$ is odd, we get that
    \begin{align*}
        \tau\otimes \tau(\pi)&=q\tau\otimes \tau\left(\pi\setminus V_1,V_1^{(0)}\right)\\
        &=q\cdot \frac{q}{2}\tau\otimes \tau\left(\pi\setminus\{V_1,V_2\},V_1^{(0)},V_2^{(1)}\right)\\
        &=\cdots\\
        &=q\left(\frac{q}{2}\right)^{2l^*-1}\tau\otimes \tau\left(\pi\setminus\{V_1,\ldots,V_{2l^*}\},V_1^{(0)},\ldots,V_{2l^*}^{(1)}\right)\\
        &=0.
    \end{align*}
    Therefore, for all $\pi \notin P_2^{\operatorname{bicon}}(p)$, we have $\tau\otimes \tau(\pi)=0$. This proves the first part of the proposition.

    Assume now that $\pi \in P_2^{\operatorname{bicon}}(p)$. Let $V_1,\ldots,V_{p/2}$ be an ordering of the blocks of $\pi$ such that $V_k$ crosses at least one $V_j$ for $j<k$ and all $k$. This can always be done as $\pi$ is connected. Let $\mathcal{V}_1,\mathcal{V}_2$ be the bipartite sets of vertices of the intersection graph of $\pi$,
    \begin{align*}
        &\mathcal{V}_1=\{V_1,V_{j_1},\ldots,V_{j_m}\};\\
        &\mathcal{V}_2=\{V_2,V_{h_1},\ldots,V_{h_n}\}.
    \end{align*}
    Since $\pi$ is bipartite, if $V_k \in \mathcal{V}_1$, all crossing blocks $V_j$ of $V_k$ belong to $\mathcal{V}_2$, and similarly if $V_k \in \mathcal{V}_2$. We assume again that the color of $V_1$ is $w_1=0$, since
    \begin{align*}
        \tau\otimes \tau(\pi)=q \tau\otimes\tau \left(\pi\setminus V_1,V_1^{(0)}\right).
    \end{align*}
    We will prove that both the color $w_k$ and the binary vector $\theta^{(k)}$ of $V_k$ depend only on which bipartite set $V_k$ belongs to, namely, for all $k \ge 2$, the following hold.
    \begin{enumerate}
        \item If $V_k \in \mathcal{V}_1$, then $w_k=0$ and $\theta^{(k)}_{3}=1$;
        \item Otherwise, $V_k \in \mathcal{V}_2$, $w_k=1$ and $\theta^{(k)}_2=1$.
    \end{enumerate}
    Indeed, note first that $V_2 \in \mathcal{V}_2$, $\theta^{(2)}_2=1$ as it crosses $V_1$ and $w_1=0$. Then, Lemma \ref{lemma: case theta_2=1} implies that $w_2=1$. Assume then that the result holds for some $l$ and for all $1 \le k \le l$. Let us prove that it holds for $V_{l+1}$ as well. Indeed, assume without loss of generality that $V_{l+1} \in \mathcal{V}_1$. Then $V_{l+1}$ only crosses blocks $V_{j_t}$ such that $V_{j_t} \in \mathcal{V}_2$, for $j_t \le l$. Since all colors $w_{j_t}=1$, we deduce that
    \begin{align*}
        \theta^{(l+1)}_3=1.
    \end{align*}
    Then, Lemma \ref{lemma: case theta_3=1} implies that $w_{l+1}=0$ and induction is proved. We then apply Lemma \ref{lemma: case theta_1=1} for $V_1$, Lemma \ref{lemma: case theta_2=1} for all blocks $V_k \in \mathcal{V}_2$ and Lemma \ref{lemma: case theta_3=1} for all blocks $V_k \in \mathcal{V}_1$ for $k \ge 2$, so that
    \begin{align*}
        \tau\otimes \tau(\pi)&=q\tau\otimes \tau\left(\pi\setminus V_1,V_1^{(0)}\right)\\
        &=q\cdot \frac{q}{2}\tau\otimes \tau\left(\pi\setminus\{V_1,V_2\},V_1^{(0)},V_2^{(1)}\right)\\
        &=\cdots\\
        &=q\left(\frac{q}{2}\right)^{p/2-1}.
    \end{align*}
    It follows then that
    \begin{align*}
        \tau\otimes \tau(\pi)=q\left(\frac{q}{2}\right)^{p/2-1}=2\left(\frac{q}{2}\right)^{p/2},
    \end{align*}
    and the result is proved.
\end{proof}

\subsection{Proof of Theorem~\ref{th: main theorem}}
We are now ready to prove the main theorem. 
Recall that 
\begin{align*}
    \tau'({\bf{S}}^{p})=\sum_{\pi \in P_2(p)}\tau\otimes \tau(\pi)= \sum_{\hat{\pi},(\pi_T)_{T \in \hat{\pi}}}\prod_{T \in \hat{\pi}}\tau\otimes \tau(\pi_T), 
\end{align*}
where the second summation runs over $(\hat{\pi},(\pi_T)_{T \in \hat{\pi}})\in \Phi(P_2(p))$, using the bijection $\Phi$ defined in \eqref{equation: mapping pi connected components}.
Note that if $|T|=2$, then by Corollary~\ref{corollary: contribution of noncrossing partitions} we have $\tau\otimes \tau(\pi_T)=1$. 
Therefore, we deduce that 
\begin{align*}
    \tau'({\bf{S}}^{p})= \sum_{\hat{\pi},(\pi_T)_{T \in \hat{\pi}}} \prod_{\underset{|T|\geq 4}{T \in \hat{\pi}}}\tau\otimes \tau(\pi_T). 
\end{align*}
Using Proposition~\ref{proposition: contribution bipartite connected part}, we get
\begin{align*}
    \tau'({\bf{S}}^{p})= \sum_{\hat{\pi}\in NC(p)}\sum_{(\pi_T)_{T \in \hat{\pi}}}\prod_{\underset{|T|\geq 4}{T \in \hat{\pi}}}2 \Big(\frac{q}{2}\Big)^{\frac{|T|}{2}},
\end{align*} 
where the second summation runs over bipartite connected pair partitions $\pi_T$, for $T \in \hat{\pi}$.
Finally, note that the number of size-two blocks is precisely $\ncrblocks(\pi)$ and thus
\begin{align*}
    &|\{T \in \hat{\pi}:|T| \ge 4\}|=\ccblocks(\pi)-\ncrblocks(\pi).
\end{align*}
Since     
$$
\sum_{\substack{T \in \hat{\pi}\\ |T| \ge 4}}\frac{|T|}{2}=\crblocks(\pi),
$$
using again the bijection $\Phi$, we deduce that 
\begin{align*}
    \tau'({\bf{S}}^{p})= \sum_{\pi \in P_2^{\operatorname{bi}}(p)} 2^{\ccblocks(\pi)-\ncrblocks(\pi)} \Big(\frac{q}{2}\Big)^{\crblocks(\pi)}. 
\end{align*} 
This finishes the proof in view of Proposition~\ref{prop: moments of mu_q} and of the fact that
\begin{align*}
    \ncrblocks(\pi)+\crblocks(\pi)=|\pi|=\frac{p}{2}.
\end{align*}

\end{document}